\newcommand{\bh}[1] {\mathcal{B}(\mathcal{#1})}
\newcommand{\lspan} {\operatorname{span}}
\newcommand{\condref}[1] {\hyperref[cond.#1]{(#1)}}
\newcommand{\ZS}{Zappa-Sz\'{e}p }
\newcommand{\odometer}[1]{\mathbb{O}_{#1}}
\newcommand{\cH}{\mathcal{H}}
\newcommand{\cK}{\mathcal{K}}
\newcommand{\cL}{\mathcal{L}}
\newcommand{\cO}{\mathcal{O}}
\newtheorem{theorem}{Theorem}[section]
\newtheorem{corollary}[theorem]{Corollary}
\newtheorem{proposition}[theorem]{Proposition}   
\newtheorem{lemma}[theorem]{Lemma}
\theoremstyle{definition}
\newtheorem{definition}[theorem]{Definition}
\newtheorem{example}[theorem]{Example}
\newtheorem{remark}[theorem]{Remark}
\numberwithin{equation}{section}
\begin{document}

\title{Wold Decomposition on Odometer Semigroups}

\date{\today}

\subjclass[2010]{47A13, 47A45, 47D03}
\keywords{Wold decomposition, odometer semigroup}

\begin{abstract} We establish a Wold-type decomposition for isometric and isometric Nica-covariant representations of the odometer semigroup. These generalize the Wold-type decomposition for commuting pairs of isometries due to Popovici and for pairs of doubly commuting isometries due to S\l oci\'{n}ski. 
\end{abstract}

\author{Boyu Li}
\address{Department of Mathematics and Statistics, University of Victoria, Victoria, B.C., Canada}
\email{boyuli@uvic.ca}
\date{\today}

\thanks{The author was supported by a fellowship of the Pacific Institute for the Mathematical Sciences. The author would also like to thank Adam Dor-on for many helpful discussions that lead to this project. The author would also like to thank the anonymous reviewer for many helpful comments.}

\maketitle

\section{Introduction}

In operator theory, the classical Wold decomposition theorem states that every isometric operator can be decomposed into a unitary component and a pure isometry component \cite[Theorem 1.1]{SFBook}. This powerful theorem soon becomes an essential tool in the study of operator theory and operator algebra. For example, the celebrated Coburn's theorem on C*-algebras generated by a proper isometry \cite{Coburn1967} is rooted in this result. 

There are numerous researches on generalizing this powerful result. Suciu first considered a Wold-type decomposition for a semigroup of isometries. He showed that a semigroup of isometries ${V_p}$ decomposes into three components \cite{Suciu1968}: a unitary component, a totally non-unitary component (corresponding to the unilateral shifts in the Wold decomposition), and lastly, a third component for which he called the ``strange" component (see also \cite{Popescu1998_Wold}). Fully characterizing these components is not an easy task, even for the seemingly simple case of a pair of commuting isometries. S\l oci\'{n}ski first obtained a Wold-type decomposition for a pair of doubly commuting isometries \cite[Theorem 3]{Slocinski1980}. He proved that a pair of doubly commuting isometries decomposes into four components, one for each of the four possible combinations in which each isometry is either unitary or pure. His result has been further generalized to product systems \cite{SZ2008} and higher dimensions in \cite{Sarkar2014}. However, without the doubly commuting assumption,  S\l oci\'{n}ski constructed an example where the pair of commuting isometries are neither unitary nor pure on a reducing subspace \cite[Example 1]{Slocinski1980}. This last mysterious piece was finally settled by Popovici \cite{Popovici2004} where he introduced the notion of a weak bi-shift. Along another direction, for families of non-commuting isometries, a well-known result of Popescu established a Wold-type decomposition for row isometries \cite{Popescu1989}. 

In this paper, we study the Wold-type decomposition for isometric representations of the odometer semigroup. The odometer semigroup, also known as the adding machine or the Baumslag-Solitar monoid $BS(1,n)^+$, encodes a simple yet intriguing semigroup structure. Its semigroup C*-algebra and boundary quotient has been studied extensively in recent years \cite{ART2018, BRRW, Spielberg2012}.

We consider two classes of representations. One being the isometric representation, which is a generalization for a pair of commuting isometries. Another being the isometric Nica-covariant representation, which is a generalization for a pair of doubly commuting isometries. 
We obtain a Wold-type decomposition for each class of representations, generalizing both Popovici's and S\l oci\'{n}ski's results in this context (Theorem \ref{thm.Wold} and Theorem \ref{thm.Wold.NC}).
Prior literature on Wold decomposition often assume the doubly commuting condition, which is a special case of the Nica-covariance condition.  However, our understanding of the Wold decomposition in the general Nica-covariance setting is limited. Our work contributes to this vein in the literature. In particular, our results on the odometer semigroups is the first Wold-type decomposition in the more general Nica-Covariant setting.
Finally, we provide several concrete atomic representations as examples. In particular, Example \ref{ex.ws} gives an example of weak bi-shift representation in our context, that bear a close resemblance to the example of S\l oci\'{n}ski.  

\section{Preliminary} 

We first recall that for an operator $T\in\bh{H}$, a subspace $\cK\subset \cH$ is invariant for $T$ if $T\cK\subset \cK$. We say $\cK$ reduces $T$ (equivalently, we say $\cK$ is a reducing subspace for $T$) if $\cK$ is invariant for both $T$ and $T^*$. On the Hilbert space $\ell^2(\mathbb{N})=\overline{\lspan}\{e_n: n\geq 0\}$, the unilateral shift is the isometry $S$ defined uniquely by mapping $e_n \mapsto e_{n+1}$. For an isometry $V\in\bh{H}$, a subspace $\cL$ is called wandering for $V$ (equivalently, we say $\cL$ is a wandering vector space for $V$) if $\{V^m \cL: m\geq 0\}$ are pairwise orthogonal. Given a wandering subspace $\cL$ for $V$, we can build a invariant subspace $\cK=\bigoplus_{m\geq 0} V^m \cL$ for $V$, on which $V$ is unitarily equivalent to a direct sum of $\dim \cL$-copies of the unilateral shift. This space $\cK$ is reducing for $V$ if $\cL$ is invariant under $V^*$. The Wold decomposition theorem states that every isometry can be decomposed into a unitary and a direct sum of unilateral shifts. 

\begin{theorem}[Wold] Let $V$ be an isometry on a Hilbert space $\cH$. Then $\cH$ can be decomposed as a direct sum of two reducing subspaces $\cH_u$ and $\cH_s$ for $V$, such that $V|_{\cH_u}$ is unitary and $V|_{\cH_s}$ is unitarily equivalent to a direct sum of unilateral shifts. Moreover, this decomposition is unique and we can describe $\cH_u$ and $\cH_s$ explicitly by: 
\begin{align*}
\cH_u &= \bigcap_{m\geq 0} V^m \cH; \\
\cH_s &= \bigoplus_{m\geq 0} V^m \ker V^*.
\end{align*}
Here, $\ker V^*$ is a wandering subspace for $V$.
\end{theorem}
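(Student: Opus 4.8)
\emph{Proof sketch.}
The plan is to verify directly that the two subspaces named in the statement have the asserted properties, then to establish orthogonality, spanning, and uniqueness. Throughout write $\cL := \ker V^* = \cH \ominus V\cH$, using that $V$ being an isometry gives $V^*V = I$ and that $V\cH$ is closed.

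First I would check that $\cL$ is wandering for $V$. For integers $m > n \geq 0$ and $x,y \in \cL$, the identity $V^*V = I$ gives $\langle V^m x, V^n y\rangle = \langle V^{m-n}x, y\rangle$, and this vanishes since $V^{m-n}x \in V\cH$ while $y \in \cL = \cH\ominus V\cH$. Hence $\cH_s = \bigoplus_{m\geq 0} V^m\cL$ is a genuine orthogonal direct sum. It is clearly invariant under $V$, and it is invariant under $V^*$ because $V^*(V^m\ell) = V^{m-1}\ell$ for $m\geq 1$ and $V^*\ell = 0$ for $\ell\in\cL$; so $\cH_s$ reduces $V$, and by construction $V|_{\cH_s}$ is unitarily equivalent to $\dim\cL$ copies of the unilateral shift.

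Next I would analyze $\cH_u = \bigcap_{m\geq 0} V^m\cH$. Applying $V$ repeatedly to the orthogonal splitting $\cH = \cL\oplus V\cH$ and using that $V$ preserves orthogonality yields, for every $m\geq 1$, the orthogonal decomposition $\cH = \bigl(\bigoplus_{k=0}^{m-1} V^k\cL\bigr)\oplus V^m\cH$. Intersecting these over $m$ shows that $\cH\ominus\cH_s = \bigcap_{m\geq 0}V^m\cH = \cH_u$, so $\cH = \cH_u\oplus\cH_s$. To see $\cH_u$ reduces $V$ with $V|_{\cH_u}$ unitary: invariance under $V$ is immediate from $V(V^m\cH)\subseteq V^m\cH$; for surjectivity, given $x\in\cH_u$ there is for each $m$ a vector $y_m\in V^m\cH$ with $Vy_m = x$, and injectivity of $V$ forces all $y_m$ to coincide with a single $y\in\bigcap_m V^m\cH = \cH_u$, so $V|_{\cH_u}$ is onto and hence unitary; finally $V^*x = V^*Vy = y\in\cH_u$ gives invariance under $V^*$.

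For uniqueness, suppose $\cH = \cH_u'\oplus\cH_s'$ is another decomposition into reducing subspaces with $V|_{\cH_u'}$ unitary and $V|_{\cH_s'}$ a direct sum of unilateral shifts. Since both summands reduce $V$ one has $V^m\cH = V^m\cH_u'\oplus V^m\cH_s'$ for all $m$; using that $V|_{\cH_u'}$ is onto while $\bigcap_m S^m(\ell^2(\mathbb{N})) = 0$ for the unilateral shift, intersecting over $m$ gives $\cH_u = \bigcap_m V^m\cH = \cH_u'\oplus 0 = \cH_u'$, and taking orthogonal complements gives $\cH_s = \cH_s'$. The argument is elementary; the step needing the most care is the repeated orthogonal splitting $\cH = \bigl(\bigoplus_{k=0}^{m-1} V^k\cL\bigr)\oplus V^m\cH$ and the passage to the limit, since this is what simultaneously identifies $\cH_u$ with $\bigcap_m V^m\cH$ and shows the two pieces exhaust $\cH$.
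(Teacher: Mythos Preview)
Your proof is correct and follows the standard textbook route for the classical Wold decomposition. Note, however, that the paper does not actually give its own proof of this theorem: it is stated in the Preliminary section as a known result, with a reference to \cite[Theorem~1.1]{SFBook}, and is used as background input for the paper's new results on the odometer semigroup. So there is no ``paper's proof'' to compare against; what you have written is essentially the argument one finds in the cited reference, and every step (wandering property of $\ker V^*$, the iterated splitting $\cH = \bigl(\bigoplus_{k=0}^{m-1} V^k\cL\bigr)\oplus V^m\cH$, reducibility and surjectivity on $\cH_u$, and the uniqueness via intersecting $V^m\cH$) is handled correctly.
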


We call an isometry $V$ \emph{pure} if $\cH_u=\{0\}$. Equivalently, a pure isometry is a direct sum of unilateral shifts. 
The spaces $\cH_u$ and $\cH_s$ are the largest reducing subspaces for $V$ on which $V$ is unitary and pure respectively. In other words, if $\cH_0$ reduces $V$ and $V|_{\cH_0}$ is unitary (or pure), then $\cH_0\subset \cH_u$ (or $\cH_0\subset \cH_s$). 

The Wold-type decomposition for a pair of commuting isometries is not simple. S\l oci\'{n}ski first obtained a Wold-type decomposition when two isometries $S_1, S_2$ doubly commute (that is, $S_1$ commutes with both $S_2$ and $S_2^*$) \cite[Theorem 3]{Slocinski1980}.

\begin{theorem}[S\l oci\'{n}ski] For a pair of doubly commuting isometries $S_1, S_2$ on $\cH$. The space $\cH$ uniquely decomposes into a direct sum of four reducing subspaces $\cH=\cH_{uu}\oplus \cH_{us} \oplus \cH_{su} \oplus \cH_{ss}$, such that the restriction of $S_1,S_2$ on each subspace is unitary-unitary, unitary-pure, pure-unitary, and pure-pure respectively.  
\end{theorem}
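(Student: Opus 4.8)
The plan is to apply the classical one-variable Wold decomposition to each isometry in turn, exploiting the doubly commuting hypothesis to ensure the resulting pieces are jointly reducing and that the decompositions of $S_1$ and $S_2$ are compatible. First I would apply the Wold theorem to $S_1$, writing $\cH = \cH_u^{(1)} \oplus \cH_s^{(1)}$ where $\cH_u^{(1)} = \bigcap_{m\geq 0} S_1^m \cH$ and $\cH_s^{(1)} = \bigoplus_{m\geq 0} S_1^m \ker S_1^*$, and both summands reduce $S_1$. The key observation is that because $S_2$ commutes with both $S_1$ and $S_1^*$, the operator $S_2$ maps $\bigcap_m S_1^m\cH$ into itself and likewise $S_2^*$ does; hence $\cH_u^{(1)}$ and $\cH_s^{(1)}$ both reduce $S_2$ as well. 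One checks this directly: if $x \in \bigcap_m S_1^m \cH$, then for each $m$ write $x = S_1^m y_m$, and $S_2 x = S_2 S_1^m y_m = S_1^m S_2 y_m \in S_1^m\cH$; similarly $S_2^* x = S_1^m S_2^* y_m$, using $S_2^* S_1 = S_1 S_2^*$. Thus both $\cH_u^{(1)}$ and $\cH_s^{(1)}$ are reducing for the pair $(S_1, S_2)$.

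Next I would apply the Wold decomposition of $S_2$ \emph{restricted to each of these two reducing subspaces} separately. On $\cH_u^{(1)}$ the restriction of $S_2$ is still an isometry (indeed it is an isometry there because $\cH_u^{(1)}$ is invariant), doubly commuting with the unitary $S_1|_{\cH_u^{(1)}}$, so it splits as $\cH_{uu} \oplus \cH_{us}$ with $S_2$ unitary on the first and pure on the second; and again by the same commutation argument (now with the roles of $S_1$ and $S_2$ interchanged) these two pieces reduce $S_1$ too. Since $S_1$ was already unitary on all of $\cH_u^{(1)}$, it remains unitary on each summand. Similarly, decomposing $S_2|_{\cH_s^{(1)}} = (\text{unitary part}) \oplus (\text{pure part})$ gives $\cH_{su} \oplus \cH_{ss}$, both reducing $S_1$, on which $S_1$ stays pure because a restriction of a pure isometry to a reducing subspace is pure (this follows from the explicit formula $\cH_u = \bigcap_m S_1^m\cH$ intersected with the subspace, which is $\{0\}$). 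Collecting these, $\cH = \cH_{uu} \oplus \cH_{us} \oplus \cH_{su} \oplus \cH_{ss}$ with the stated behavior of $S_1, S_2$ on each.

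For uniqueness, I would argue that each summand is forced: $\cH_{uu}$ must be contained in the largest reducing subspace on which $S_1$ is unitary, namely $\cH_u^{(1)}$, and within that in the largest reducing subspace on which $S_2$ is unitary; and conversely the $\cH_{uu}$ we constructed is exactly this intersection. The maximality statements at the end of the Wold theorem discussion in the excerpt ($\cH_u$ and $\cH_s$ are the largest reducing subspaces on which $V$ is unitary resp.\ pure) give this immediately once one knows the four subspaces are mutually orthogonal and reducing — so any other such decomposition $\cH = \cH'_{uu}\oplus\cdots$ has $\cH'_{uu}\subseteq \cH_u^{(1)}$ and $\cH'_{uu}$ reducing with $S_2$ unitary on it, forcing $\cH'_{uu}\subseteq \cH_{uu}$, and by symmetry among all four indices and a dimension/exhaustion count all inclusions are equalities.

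The main obstacle — really the only nontrivial point — is verifying that the Wold pieces of $S_1$ reduce $S_2$ (and vice versa at the second stage); this is precisely where the \emph{doubly} commuting hypothesis is essential, since for merely commuting isometries $S_2^*$ need not commute with $S_1$ and the argument $S_2^* S_1^m y_m = S_1^m S_2^* y_m$ breaks down — indeed S\l oci\'{n}ski's Example 1 shows the conclusion genuinely fails without it. Everything else is bookkeeping: iterating a theorem we are allowed to assume, and tracking that ``unitary'' and ``pure'' are each inherited by restriction to reducing subspaces.
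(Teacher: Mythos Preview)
Your proof is correct and follows the standard two-step strategy: Wold-decompose with respect to $S_1$, use doubly commuting to see those pieces reduce $S_2$, then Wold-decompose $S_2$ on each piece. Note, however, that the paper does not give its own proof of this statement --- it is quoted in the preliminaries as S\l oci\'{n}ski's result with a citation to \cite{Slocinski1980}. That said, your argument is exactly the template the paper later uses to prove its Nica-covariant generalization (Theorem~\ref{thm.Wold.NC}): there the authors show that the Wold pieces $\cH_u^W,\cH_s^W$ for $W$ reduce all of the $V_i$ (the analogue of your ``key observation,'' using the Nica-covariance relation $W^*V_1=V_nW^*$ in place of doubly commuting), and then apply Popescu's row-isometry Wold decomposition on each piece, which for $n=1$ collapses precisely to your proof.
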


In fact, the $\cH_{ss}$-component is unitarily equivalent to a direct sum of bi-shifts on $\ell^2(\mathbb{N}^2)$. However, when $S_1,S_2$ do not doubly commute, it is possible to have a reducing subspace on which $S_i$ are neither unitary nor pure. One example was given by S\l oci\'{n}ski \cite[Example 1]{Slocinski1980}, where we take $\cH=\overline{\lspan}\{e_{i,j}: i,j\in\mathbb{Z}, i\geq 0 \text{ or } j\geq 0\}$ and $S_1 e_{i,j}=e_{i+1,j}$ and $S_2 e_{i,j}=e_{i,j+1}$. The characterization of this final puzzling piece was finally settled by Popovici, where he introduced the notion of weak bi-shift \cite[Definition 2.5]{Popovici2004}.

\begin{definition} A pair of commuting isometries $S_1, S_2$ is called a weak bi-shift if $S_1|_{\cap_{j\geq 0} \ker S_2^* S_1^j}$, $S_2|_{\cap_{j\geq 0} \ker S_1^* S_2^j}$, and $S_1S_2$ are pure isometries. 
\end{definition}

The final pure-pure component in S\l oci\'{n}ski's result is then replaced by a weak bi-shift component \cite[Theorem 2.8]{Popovici2004}.

\begin{theorem}[Popovici]\label{thm.Popovici} For a pair of commuting isometries $S_1, S_2$ on $\cH$. The space $\cH$ decomposes into a direct sum of four reducing subspaces $\cH=\cH_{uu}\oplus \cH_{us} \oplus \cH_{su} \oplus \cH_{ws}$, such that the restriction of $S_1,S_2$ on each subspace is unitary-unitary, unitary-pure, pure-unitary, and weak bi-shift respectively. Moreover, this decomposition is unique and we can explicitly write out the spaces by the following formulae
\begin{align*}
    \cH_{uu} &= \bigcap_{n\geq 0} (S_1S_2)^n \cH \\
    \cH_{us} &= \bigoplus_{n\geq 0} S_2^n \left(\bigcap_{m\geq 0} S_1^m \big(\cap_{i\geq 0} \ker S_2^* S_1^i\big)\right) \\
    \cH_{su} &= \bigoplus_{n\geq 0} S_1^n \left(\bigcap_{m\geq 0} S_2^m \big(\cap_{i\geq 0} \ker S_1^* S_2^i\big)\right)
\end{align*}
\end{theorem}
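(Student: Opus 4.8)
The plan is to construct the four reducing subspaces explicitly from the stated formulae, verify that each carries a representation of the advertised type, and then deduce maximality and uniqueness in one pass. \textbf{First} I would dispose of the unitary-unitary part. Applying the classical Wold decomposition to the single isometry $V=S_1S_2$, set $\cH_{uu}=\bigcap_{n\geq 0}(S_1S_2)^n\cH$, the unitary part of $V$. Since $S_1$ and $S_2$ commute, $S_iV^n=V^nS_i$, so $\cH_{uu}$ is invariant for $S_1$ and $S_2$; the restrictions $A=S_1|_{\cH_{uu}}$ and $B=S_2|_{\cH_{uu}}$ are then isometries on $\cH_{uu}$ whose product $AB=V|_{\cH_{uu}}$ is unitary, and from $ABB^*A^*=I$ together with $BB^*\leq I$ one gets $AA^*=I$ and hence $BB^*=I$, so both are unitary; since an isometry that is unitary on an invariant subspace has that subspace reducing, $\cH_{uu}$ reduces $S_1,S_2$ with both unitary, and conversely any reducing subspace on which both are unitary makes $S_1S_2$ unitary and thus lies in $\cH_u(S_1S_2)=\cH_{uu}$. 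Write $\cK=\cH\ominus\cH_{uu}$, which reduces $S_1,S_2$ and on which $S_1S_2$ is pure.

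\textbf{Next} I would build the mixed parts. Put $\cE_2=\bigcap_{i\geq 0}\ker S_2^*S_1^i$ (a closed subspace of $\ker S_2^*$), $\cL=\bigcap_{m\geq 0}S_1^m\cE_2$, and $\cH_{us}=\bigoplus_{n\geq 0}S_2^n\cL$, and define $\cH_{su}$ symmetrically by interchanging $S_1$ and $S_2$. The points to check are: (a) $\cL\subseteq\ker S_2^*$ is wandering for $S_2$, so the sum defining $\cH_{us}$ is genuinely orthogonal and $S_2|_{\cH_{us}}$ is a direct sum of unilateral shifts, hence pure; (b) $S_1\cL=\cL$, where the nontrivial inclusion $\bigcap_{m\geq 1}S_1^m\cE_2\subseteq\cE_2$ is read straight off the definition of $\cE_2$, so $S_1$ is a bijective isometry on $\cL$, and since the spaces $S_2^n\cL$ are orthogonal and $S_1$ commutes with $S_2$ it follows that $S_1$ is unitary on all of $\cH_{us}$; (c) $S_2$ and $S_2^*$ each map $\cH_{us}$ into itself (the bottom summand is annihilated by $S_2^*$ because $\cL\subseteq\ker S_2^*$), and $S_1^*$ does too because $S_1$ is unitary there, so $\cH_{us}$ reduces both $S_1$ and $S_2$, with $S_1$ unitary and $S_2$ pure; dually for $\cH_{su}$.

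\textbf{Then} I would prove maximality, orthogonality, the weak bi-shift property, and uniqueness. For maximality: if $\cN$ reduces $S_1,S_2$ with $S_1|_{\cN}$ unitary and $S_2|_{\cN}$ pure, then unitarity of $S_1|_{\cN}$ forces $S_2^*S_1=S_1S_2^*$ on $\cN$ (so the pair doubly commutes there), whence $\cN\cap\ker S_2^*\subseteq\cE_2$ and then $\cN\cap\ker S_2^*\subseteq\cL$, and purity of $S_2|_{\cN}$ gives $\cN=\bigoplus_n S_2^n(\cN\cap\ker S_2^*)\subseteq\cH_{us}$; in particular $\cH_{uu},\cH_{us},\cH_{su}$ are pairwise orthogonal, because through the single-variable Wold decompositions $\cH_{us}$ lies in the $S_1$-unitary part and $\cH_{su}$ in the $S_1$-pure part, while $\cH_{uu}\perp\cK$. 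Set $\cH_{ws}=\cK\ominus(\cH_{us}\oplus\cH_{su})$. Then $S_1S_2$ is pure on $\cH_{ws}$ since $\cH_{ws}\subseteq\cK$; moreover $\bigcap_{j\geq 0}\ker(S_2^*S_1^j|_{\cH_{ws}})=\cE_2\cap\cH_{ws}$ is $S_1$-invariant and $\bigcap_k S_1^k(\cE_2\cap\cH_{ws})\subseteq\cL\cap\cH_{ws}\subseteq\cH_{us}\cap\cH_{ws}=\{0\}$, so $S_1$ restricted to it is pure, and symmetrically for $S_2$, so $(S_1,S_2)|_{\cH_{ws}}$ is a weak bi-shift. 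For uniqueness, given any decomposition $\cH=\cM_{uu}\oplus\cM_{us}\oplus\cM_{su}\oplus\cM_{ws}$ of the stated types, observe $S_1S_2$ is unitary on $\cM_{uu}$ and pure on the three other summands, which forces $\cM_{uu}=\cH_{uu}$; then $\cM_{us}\subseteq\cH_{us}$ by maximality, and conversely, for $x\in\cL$ one splits $x$ along the $\cM$-decomposition, kills the $\cM_{uu}$ and $\cM_{su}$ components (they lie in $\ker S_2^*$, where those unitaries are injective), and kills the $\cM_{ws}$ component using the key fact that on a weak bi-shift $\cM$ the space $\bigcap_m S_1^m\big(\bigcap_i\ker(S_2^*S_1^i|_{\cM})\big)$ is zero, as it is $S_1$-invariant with $S_1$ acting on it as a bijective isometry, hence unitary, yet it sits inside the subspace on which the defining condition makes $S_1$ pure; hence $\cL\subseteq\cM_{us}$, so $\cH_{us}=\cM_{us}$, likewise $\cH_{su}=\cM_{su}$, and finally $\cH_{ws}=\cM_{ws}$.

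\textbf{The main obstacle} is the interaction between the reducing condition and the one-sided kernels: the naive guess $\bigcap_m S_1^m\cK$ for the largest $S_1$-unitary part of $\cK$ is only $S_2$-invariant, not $S_2$-reducing, because commutation alone does not move $S_1^m$ past $S_2^*$, so one is forced to cut down to $\cE_2=\bigcap_i\ker S_2^*S_1^i$ before intersecting the powers of $S_1$. Arranging the bookkeeping so that $\cL=\bigcap_m S_1^m\cE_2$ is simultaneously $S_1$-unitary, $S_2$-wandering, and maximal, together with the companion statement that a weak bi-shift carries no nonzero unitary-pure, pure-unitary, or unitary-unitary reducing subspace, is the heart of the matter, and is exactly what makes both the existence of the decomposition and its uniqueness go through.
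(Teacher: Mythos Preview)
The paper does not itself prove this theorem; it is quoted from Popovici's original article as background in the preliminaries. That said, your argument is correct and mirrors precisely the architecture the paper uses for its own generalization (Theorem~\ref{thm.Wold} via Propositions~\ref{prop.uu}, \ref{prop.us}, \ref{prop.su}, which specialize to Popovici's result when $n=1$): obtain $\cH_{uu}$ as the unitary part of the product isometry $S_1S_2$, build $\cH_{us}$ and $\cH_{su}$ from the subspaces $\bigcap_m S_1^m\cE_2$ and its symmetric counterpart, establish maximality by exploiting the doubly-commuting relation that holds automatically on any reducing subspace where one factor is unitary (exactly the $n=1$ instance of the observation inside the proof of Proposition~\ref{prop.us} and of Lemma~\ref{lm.obs0}), and take $\cH_{ws}$ to be the orthogonal remainder.
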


On a different direction, Popescu considered a Wold-type decomposition for $n$ non-commuting isometries. A family $\{V_1,\cdots,V_n\}$ of $n$ non-commuting isometries is called a row isometry if $\sum_{k=1}^n V_k V_k^* \leq I$, or equivalently, $\{V_1,\cdots,V_n\}$ have pairwise orthogonal ranges. 
It is called a row unitary if $\sum_{k=1}^n V_kV_k^* = I$ and such $\{V_1,\cdots,V_n\}$ are often called Cuntz isometries as they generate the Cuntz algebra $\cO_n$. Row isometries can be viewed as a representation of the free semigroup $\mathbb{F}_n^+$, where for each $\mu=\mu_1\cdots\mu_m\in\mathbb{F}_n^+$, $V_\mu=V_{\mu_1}\cdots V_{\mu_m}$. 
One way to build a row isometry is to consider the left regular representation of $\mathbb{F}_n^+$ on $\ell^2(\mathbb{F}_n^+)=\overline{\lspan}\{e_\mu\}$ and set $V_i e_\mu=e_{i\mu}$. 
A space $\cL$ is called wandering for $\{V_1,\cdots,V_n\}$ if $\{V_\mu \cL: \mu\in\mathbb{F}_n^+\}$ are pairwise orthogonal. Given a wandering subspace $\cL$, one can show that on the reducing subspace $\cK=\bigoplus_{\mu\in\mathbb{F}_n^+} V_\mu \cL$, $\{V_1,\cdots,V_n\}$ is unitary equivalent to a direct sum of $\dim \cL$-copies of left regular representations. Popescu showed that similar to the Wold decomposition of a single isometry, every row isometry decomposes into a row unitary component and a direct sum of left regular representations \cite[Theorem 1.3]{Popescu1989}. 

\begin{theorem}[Popescu] Let $\{V_1,\cdots,V_n\}$ be a family of $n$ isometries with orthogonal ranges on $\cH$. Then $\cH$ decomposes into two reducing subspaces $\cH=\cH_u\oplus \cH_s$, such that $\{V_1,\cdots,V_n\}$ is a row unitary on $\cH_u$ and is a  direct sum of left regular representations on  $\cH_s$. Moreover, this decomposition is unique and we can explicitly describe $\cH_u$ and $\cH_s$ by:
\begin{align*}
\cH_u &= \bigcap_{m\geq 0} \bigoplus_{|\mu|=m} V_\mu \cH; \\
\cH_s &= \bigoplus_{\mu\in\mathbb{F}_n^+} V_\mu  (\bigcap_{k=1}^n \ker V_k^*).
\end{align*}
Here, $\bigcap_{k=1}^n \ker V_k^*$ is a wandering subspace for $\{V_1,\cdots,V_n\}$. 
\end{theorem}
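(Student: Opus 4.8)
The approach is to imitate the single-isometry Wold decomposition, with the free semigroup $\mathbb{F}_n^+$ playing the role of $\mathbb{N}$. Write $\cL = \bigcap_{k=1}^n \ker V_k^*$. Because each $V_k$ is an isometry and the ranges $V_k\cH$ are pairwise orthogonal, the subspace $\bigoplus_{k=1}^n V_k\cH$ is closed with orthogonal complement exactly $\cL$, so $\cH = \cL \oplus \bigoplus_{k=1}^n V_k\cH$. I would apply this identity inside each summand $V_\mu\cH$ (using that $V_\mu$ is an isometry) and iterate, obtaining for every $m\geq 1$ an orthogonal decomposition $\cH = \big(\bigoplus_{|\mu|<m} V_\mu\cL\big) \oplus \big(\bigoplus_{|\mu|=m} V_\mu\cH\big)$; in particular the subspaces $\{V_\mu\cL\}_{\mu\in\mathbb{F}_n^+}$ are pairwise orthogonal, so $\cL$ is wandering, and the subspaces $\bigoplus_{|\mu|=m}V_\mu\cH$ decrease in $m$. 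Passing to the limit $m\to\infty$ then gives the orthogonal decomposition $\cH = \cH_s\oplus\cH_u$ with $\cH_s = \bigoplus_{\mu}V_\mu\cL$ and $\cH_u = \bigcap_m\bigoplus_{|\mu|=m}V_\mu\cH$ as in the statement.

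Next I would check the structure on each piece. The subspace $\cH_s$ is invariant under each $V_i$ by construction, and invariant under each $V_i^*$ by the direct computation of $V_i^*V_\mu\cL$ in the three cases $\mu = i\nu$, $\mu$ beginning with some $j\neq i$ (so $V_i^*V_\mu\cH\subseteq V_i^*V_j\cH = 0$), and $\mu$ empty (so $V_i^*\cL = 0$); hence $\cH_s$ reduces $\{V_i\}$. Fixing an orthonormal basis of $\cL$ and sending $V_\mu$ applied to a basis vector to the standard basis vector $e_\mu$ in the corresponding copy of $\ell^2(\mathbb{F}_n^+)$ identifies $\{V_i\}|_{\cH_s}$ with a direct sum of $\dim\cL$ copies of the left regular representation. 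Then $\cH_u = \cH_s^\perp$ also reduces $\{V_i\}$ (and $V_i\cH_u\subseteq\cH_u$ may be seen directly, since $V_i$ maps level $m$ into level $m+1\subseteq$ level $m$). Finally, for $v\in\cH_u$ the inclusion $\cH_u\subseteq\bigoplus_i V_i\cH$ lets us write $v = \sum_i V_i(V_i^*v)$ with each $V_i^*v\in\cH_u$, so $\bigoplus_i V_i\cH_u = \cH_u$, i.e. $\sum_i V_iV_i^* = I$ on $\cH_u$ and $\{V_i\}|_{\cH_u}$ is a row unitary.

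For uniqueness, suppose $\cH = \cH_u'\oplus\cH_s'$ is another decomposition of the stated type. Row unitarity of $\{V_i\}$ on $\cH_u'$ gives $\bigoplus_{|\mu|=m}V_\mu\cH_u' = \cH_u'$ for all $m$ (induct on $m$), hence $\cH_u'\subseteq\bigoplus_{|\mu|=m}V_\mu\cH$ for all $m$, so $\cH_u'\subseteq\cH_u$. Since $\{V_i\}|_{\cH_s'}$ is a direct sum of left regular representations, $\bigcap_m\bigoplus_{|\mu|=m}V_\mu\cH_s' = \{0\}$ (clear on $\ell^2(\mathbb{F}_n^+)$ and passing to direct sums componentwise). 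Both $\cH_u'$ and $\cH_s'$ being reducing, $\bigoplus_{|\mu|=m}V_\mu\cH = \cH_u'\oplus\bigoplus_{|\mu|=m}V_\mu\cH_s'$; intersecting over $m$ and using $\cH_u'\perp\cH_s'$ yields $\cH_u = \cH_u'$, and therefore $\cH_s = \cH_s'$.

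I expect the step requiring the most care to be the handling of infinite orthogonal direct sums in the limiting argument: justifying that the decompositions $\cH = \big(\bigoplus_{|\mu|<m}V_\mu\cL\big)\oplus\big(\bigoplus_{|\mu|=m}V_\mu\cH\big)$ assemble correctly as $m\to\infty$, and that $\bigcap_m(\cW\oplus W_m) = \cW\oplus\bigcap_m W_m$ for a decreasing sequence of subspaces $W_m$ orthogonal to $\cW$. None of this is deep, but it is precisely where the orthogonality of the ranges $V_k\cH$ is used in an essential way, so it should be carried out directly rather than quoted from the $n=1$ case.
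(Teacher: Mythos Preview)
The paper does not give its own proof of this statement: Popescu's theorem is quoted in the preliminaries as a known result with a citation to \cite{Popescu1989}, and is then used as a black box throughout Section~3 and Section~4. So there is no in-paper proof to compare against.

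That said, your proposal is correct and is essentially the standard argument (and, up to presentation, Popescu's original one). The iteration
\[
\cH \;=\; \Big(\bigoplus_{|\mu|<m} V_\mu\cL\Big)\ \oplus\ \Big(\bigoplus_{|\mu|=m} V_\mu\cH\Big)
\]
is exactly right, the wandering property of $\cL$ and the reducing/structure claims on $\cH_s$ and $\cH_u$ are handled correctly, and the uniqueness argument via the maximality of $\cH_u$ is fine. Your caution about the limiting step is well placed but, as you note, routine once orthogonality is in hand: the subspaces $\bigoplus_{|\mu|=m}V_\mu\cH$ are nested and closed, so their intersection is $\cH_u$, and its orthogonal complement is the closure of the increasing union $\bigcup_m\bigoplus_{|\mu|<m}V_\mu\cL=\cH_s$.
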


We say a row isometry $\{V_1,\cdots,V_n\}$ is \emph{pure} if $\cH_u=\{0\}$, in other words, if it is a direct sum of copies of the left regular representation. Similar to the Wold decomposition for a single isometry, the spaces $\cH_u$ and $\cH_s$ are the largest reducing subspaces for $\{V_i\}$ on which $\{V_i\}$ is unitary and pure respectively. In other words, if $\cH_0$ reduces $\{V_i\}$ and $\{V_i\}|_{\cH_0}$ is unitary (or pure), then $\cH_0\subset \cH_u$ (or $\cH_0\subset \cH_s$). 

In this paper, we study a Wold-type decomposition for two classes of representation of the odometer semigroup $\odometer{n}$. The odometer semigroup $\odometer{n}$ is generated by $n$ free generators $v_1,\cdots,v_n$ and one extra generator $w$ such that $wv_k=v_{k+1}$ for all $1\leq k\leq n-1$ and $wv_n=v_1w$. It is also known as the adding machine, where $w$ mimics the add-one operation on the free semigroup $\mathbb{F}_n^+$. One can view $\odometer{n}$ as a \ZS product of the free semigroup $\mathbb{F}_n^+$ by $\mathbb{N}$ \cite{BRRW}. The semigroup $\odometer{n}$ is also isomorphic to the Baumslag Solitar monoid $BS(1,n)^+$ which is generated by two generators $a,b$ with $b^n a=ab$. The isomorphism can be realized by identifying $b$ with $w$ and $b^{k-1}a$ as $v_k$. As a Baumslag-Solitar monoid, each element $x\in\odometer{n}$ has a unique representation \cite[Proposition 2.3]{Spielberg2012}:
\[x=w^{a_1-1} v_1 w^{a_2-1} v_1 \cdots w^{a_m-1} v_1 w^N, 1\leq a_i \leq n, N\geq 0.\]
This is obtained by shifting as many $w$ to the right as possible. Using the notation $w^{a_i-1} v_1=v_{a_i}$, we have that each $x\in\odometer{n}$ has a unique representation $x=v_\mu w^N$ for some $\mu\in\mathbb{F}_n^+$ and $N\geq 0$. One can also shift as many $w$ to the left as possible and show that every $x\in\odometer{n}$ has a unique representation $x=w^p v_1^q$ for some $p,q\geq 0$. 

An \emph{isometric representation of the odometer semigroup $\odometer{n}$} is defined as a collection of an isometry $W$ and a row isometry $\{V_1,\cdots,V_n\}$ such that $WV_k=V_{k+1}$ for each $1\leq k\leq n-1$ and $WV_n=V_1W$. One may equivalently describe it as a pair of isometries $W$ and $V_1$ such that $W^n V_1=V_1 W$ and $\{W^k V_1: 0\leq k\leq n-1\}$ having pairwise orthogonal ranges. It is clear that when $n=1$, this is simply a pair of commuting isometries. 

In recent years, there is a great advancement in our understanding of semigroup C*-algebras, started by the celebrated work of Nica on covariant representations of quasi-lattice ordered semigroups. It is known that the odometer semigroups are quasi-lattice ordered in the sense of Nica \cite{Spielberg2012}.  One may refer to \cite{ART2018, BRRW, Spielberg2012} for studies of its semigroup C*-algebras and \cite{Nica1992} for the basic definition of the Nica-covariance condition. Without diving into the rich literature of the Nica-covariance condition, we call an isometric representation of $\odometer{n}$ \emph{Nica-covariant} if $W^* V_1 = V_n W^*$. This precisely coincides with isometric Nica-covariant representations of $\odometer{n}$ \cite{BRRW}. Notice that when $n=1$, this corresponds to the case of a pair of doubly commuting isometries.

\section{Wold decomposition for isometric representations}

We first derive a Wold-type decomposition for isometric representations of $\odometer{n}$. The decomposition can be greatly simplified when the representation is Nica-covariant. Let $\{W,V_1,V_2,\cdots,V_n\}$ be an isometric representation of the odometer semigroup $\odometer{n}$ on some Hilbert space $\cH$. First, for the unitary-row unitary component, we start with the following observation.

\begin{lemma}\label{lm.obs1} An isometric representation $\{W,V_1,\cdots,V_n\}$ has a unitary $W$ and a row unitary $\{V_1,\cdots,V_n\}$ if and only if $\{V_2,\cdots, V_n, V_1W\}$ is a row unitary. 
\end{lemma}

\begin{proof} The forward direction is trivial. For the converse, we observe that
\[I=V_1WW^* V_1^* + \sum_{i=2}^n V_i V_i^* \leq \sum_{i=1}^n V_i V_i^* \leq I.\]
This implies that $\{V_1,\cdots,V_n\}$ is a row unitary, and that $V_1WW^*V_1^*=V_1V_1^*$ which implies $W$ is unitary. 
\end{proof}

We would like to point out that the set $\{V_2,\cdots, V_n, V_1W\}$ is precisely $\{WV_i: 1\leq i\leq n\}$. Recall that the unitary-unitary component for a pair of commuting isometries $S_1, S_2$ corresponds to the unitary component for their product $S_1S_2$ (\cite[Proposition 2.1]{Popovici2004}). 
We prove that the unitary-row unitary component $\cH_{uu}$ precisely corresponds to the row unitary component for $\{V_2,\cdots, V_n, V_1W\}$. 

\begin{proposition}\label{prop.uu} Let 
\[\cH_{uu}=\bigcap_{m\geq 0} \left(\bigoplus_{|\mu|=m} \prod_{j=1}^m WV_{\mu_j} \cH\right).\]
Then $\cH_{uu}$ reduces $W$ and $V_i$'s, $W|_{\cH_{uu}}$ is unitary, and $\{V_1, \cdots, V_n\}|_{\cH_{uu}}$ is a row unitary. Moreover, $\cH_{uu}$ is the largest subspace with this property. 
\end{proposition}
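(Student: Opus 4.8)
The plan is to reduce the whole statement to Popescu's Wold decomposition for a row isometry. Throughout write $U_i := WV_i$ for $1\le i\le n$; then $\{U_1,\dots,U_n\}=\{V_2,\dots,V_n,V_1W\}$ is a row isometry (orthogonality of ranges is inherited from $\{V_i\}$ because $W$ is an isometry), the odometer relations descend to it ($WU_i=U_{i+1}$ for $i<n$, $WU_n=U_1W$), and $W^*U_i=V_i$ for every $i$ since $W^*W=I$. Because $\prod_{j=1}^m WV_{\mu_j}=U_{\mu_1}\cdots U_{\mu_m}$, which I abbreviate $U_\mu$, the space $\cH_{uu}$ is exactly the row-unitary part of $\{U_i\}$ produced by Popescu's theorem. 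This hands us several facts for free: $\cH_{uu}$ reduces each $U_i$, the restriction $\{U_i\}|_{\cH_{uu}}$ is a row unitary (so $\cH_{uu}=\bigoplus_i U_i\cH_{uu}$), and $\cH_{uu}$ already reduces $V_2,\dots,V_n$ since $V_{i+1}=U_i$. Moreover $\sum_i U_iU_i^*=W\big(\sum_i V_iV_i^*\big)W^*\le WW^*\le I$ together with $\sum_i U_iU_i^*|_{\cH_{uu}}=I$ forces $WW^*\xi=\xi$ for all $\xi\in\cH_{uu}$, i.e. $W$ restricts to a coisometry on $\cH_{uu}$. What remains is to show $\cH_{uu}$ also reduces $W$ and $V_1$.

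For $W$-invariance I would show that each level set $\cK_m:=\bigoplus_{|\mu|=m}U_\mu\cH$ is $W$-invariant, by an induction on word length that uses only $WU_i=U_{i+1}$ ($i<n$) and $WU_n=U_1W$: if the first letter of $\mu$ (of length $m$) is $<n$ then $WU_\mu=U_{\mu'}$ for a word $\mu'$ of the same length, and if it is $n$ then $WU_\mu=U_1\cdot WU_{\mu''}$ for a strictly shorter word $\mu''$, to which the inductive hypothesis applies. Hence $W\cK_m\subseteq\cK_m$ for all $m$, and therefore $W\cH_{uu}=W\bigcap_m\cK_m\subseteq\bigcap_m\cK_m=\cH_{uu}$.

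The delicate step is the $W^*$-invariance of $\cH_{uu}$, and I expect this to be the main obstacle: the naive hope that each $\cK_m$ is also $W^*$-invariant fails, so the coisometry property of $W$ on $\cH_{uu}$ has to be exploited. I would prove $W^*\cH_{uu}\subseteq\cK_m$ for all $m\ge 0$ by induction on $m$, the case $m=0$ being trivial. Given $\xi\in\cH_{uu}$, write $\xi=\sum_{i=1}^n U_i\eta_i$ with $\eta_i=U_i^*\xi\in\cH_{uu}$. Using $W^*U_1=V_1$, $W^*U_i=U_{i-1}$ for $i\ge 2$, and the key identity $V_1\eta_1=V_1WW^*\eta_1=U_nW^*\eta_1$ (valid because $WW^*\eta_1=\eta_1$ and $V_1W=U_n$),
\[
W^*\xi=V_1\eta_1+\sum_{i=2}^n U_{i-1}\eta_i=U_n(W^*\eta_1)+\sum_{j=1}^{n-1}U_j\eta_{j+1}.
\]
By the inductive hypothesis $W^*\eta_1\in\cK_{m-1}$, while $\eta_{j+1}\in\cH_{uu}\subseteq\cK_{m-1}$; since $\cK_m=\bigoplus_i U_i\cK_{m-1}$, this places $W^*\xi$ in $\cK_m$ and closes the induction. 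Intersecting over $m$ gives $W^*\cH_{uu}\subseteq\cH_{uu}$. Combined with the previous paragraph, $\cH_{uu}$ reduces $W$; it then also reduces $V_1=W^*U_1$ (as $V_1\cH_{uu}\subseteq W^*\cH_{uu}\subseteq\cH_{uu}$ and $V_1^*\cH_{uu}=U_1^*W\cH_{uu}\subseteq U_1^*\cH_{uu}\subseteq\cH_{uu}$), hence reduces $W$ and every $V_i$.

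To conclude, $\{W,V_1,\dots,V_n\}$ restricts to an isometric representation of $\odometer{n}$ on $\cH_{uu}$ for which $\{V_2,\dots,V_n,V_1W\}|_{\cH_{uu}}=\{U_i\}|_{\cH_{uu}}$ is a row unitary, so Lemma~\ref{lm.obs1} yields that $W|_{\cH_{uu}}$ is unitary and $\{V_1,\dots,V_n\}|_{\cH_{uu}}$ is a row unitary. For maximality, if a subspace $\cH_0$ reduces $W$ and all $V_i$ with $W|_{\cH_0}$ unitary and $\{V_i\}|_{\cH_0}$ a row unitary, then Lemma~\ref{lm.obs1} (forward direction) shows $\{U_i\}|_{\cH_0}$ is a row unitary, and $\cH_0$ reduces each $U_i=WV_i$; the maximality clause of Popescu's theorem then forces $\cH_0\subseteq\cH_{uu}$.
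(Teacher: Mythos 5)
Your proof is correct, and while it ends in the same place as the paper --- Popescu's theorem applied to $\{U_i\}=\{V_2,\dots,V_n,V_1W\}$ plus Lemma~\ref{lm.obs1} for both the identification of the restricted representation and the maximality --- the route you take to the reducing property is genuinely different in its hardest part. The paper proves that $\cH_{uu}$ reduces $W$ and every $V_i$ by direct word computations on the spanning vectors $k=\prod_j WV_{\mu_j}h$ of each level set $\cK_m$, checking all four of $Wk$, $W^*k$, $V_ik$, $V_i^*k$ case by case on the first letter of $\mu$; in particular it shows $W^*\cK_m\subseteq\cK_{m-1}$ (which, since the $\cK_m$ are nested decreasing, already suffices after intersecting), so the ``naive hope'' you set aside is closer to working than you suggest. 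You instead harvest the reduction of $U_1,\dots,U_n$ (hence of $V_2,\dots,V_n$) directly from Popescu, extract the coisometry identity $WW^*\xi=\xi$ on $\cH_{uu}$ from the operator inequality $\sum_iU_iU_i^*\le WW^*\le I$, and then run an induction on $m$ using $\xi=\sum_iU_i\eta_i$ and the rewriting $V_1\eta_1=U_nW^*\eta_1$ to place $W^*\xi$ in $\cK_m$; the reduction of $V_1$ then falls out of $V_1=W^*U_1$. Your version buys a shorter, less case-ridden argument at the price of the extra structural observation about $WW^*$ on $\cH_{uu}$ (an observation the paper only obtains a posteriori via Lemma~\ref{lm.obs1}); the paper's version is more elementary and makes the grading behaviour of all four operators on the $\cK_m$ explicit, which it then reuses in spirit in Propositions~\ref{prop.us} and~\ref{prop.su}. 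Both arguments are complete and correct.
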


\begin{proof} We first show that $\cH_{uu}$ is reducing for both $W$ and $V_i$. For each $m\geq 0$, denote $\cL_m=\bigoplus_{|\mu|=m} (\prod_{j=1}^m WV_{\mu_j}) \cH$. Take any $|\mu|=m$ and $h\in\cH$, consider the vector $k=\prod_{j=1}^m WV_{\mu_j} h$. First consider $Wk$, we repeatedly use the fact that $WV_i=V_{i+1}$ if $i\neq n$ and $WV_n=V_1W$ and obtain:
\[Wk=\begin{cases}
(WV_{\mu_1+1}) \prod_{j=2}^m WV_{\mu_j} h, & \text{ if } \mu_1\neq n; \\
(WV_1) (WV_{\mu_2+1}) \prod_{j=3}^m WV_{\mu_j} h, & \text{ if } \mu_1=n, \mu_2\neq n; \\
& \vdots \\
(WV_1) (WV_1) \cdots (WV_1) W h, & \text{ if } \mu_1=\mu_2=\cdots=\mu_m=n. \\
\end{cases}
\]
In any case, we have that $Wk\in \cL_m$. Similarly, for $W^*k$, 
\[W^*k=W^*\prod_{j=1}^m WV_{\mu_j} h=V_{\mu_1} \prod_{j=2}^m WV_{\mu_j} h\] 
We repeatedly use the fact that $V_i=WV_{i-1}$ if $i\neq 1$ and $V_1W=WV_n$ and obtain:
\[W^* k=\begin{cases}
(WV_{\mu_1-1}) \prod_{j=2}^m WV_{\mu_j} h, & \text{ if } \mu_1\neq 1; \\
(WV_n) (WV_{\mu_2-1}) \prod_{j=3}^m WV_{\mu_j} h, & \text{ if } \mu_1=1, \mu_2\neq 1; \\
& \vdots \\
\underbrace{(WV_n) (WV_n) \cdots (WV_n)}_{m-1} V_1 h, & \text{ if } \mu_1=\mu_2=\cdots=\mu_m=1. \\
\end{cases}
\]
Therefore, $W^*k\in \cL_m$ with the exception of the last scenario in which $W^* k \in \cL_{m-1}$. This proves that $\cH_{uu}$ reduces $W$. 

Now consider $V_ik=V_i \prod_{j=1}^m WV_{\mu_j} h$. We have that
\[V_ik=\begin{cases}
(W V_{i-1}) \prod_{j=1}^m WV_{\mu_j} h, & \text{ if } i\neq 1; \\
(WV_n) (WV_{\mu_1-1}) \prod_{j=2}^m WV_{\mu_j} h, & \text{ if } i=1, \mu_1\neq 1; \\
& \vdots \\
(WV_n) (WV_n) \cdots (WV_n) V_1 h, & \text{ if } i=\mu_1=\mu_2=\cdots=\mu_m=1. \\
\end{cases}
\]
We have that $V_ik\in \cL_{m+1}$ with the exception of the last scenario in which $V_ik\in \cL_{m}$. Finally, 
\[V_i^*k=\begin{cases}
V_i^* V_{\mu_1+1} \prod_{j=2}^m WV_{\mu_j} h, &\text{ if } \mu_1\neq n;\\
V_i^* V_1 W \prod_{j=2}^m WV_{\mu_j} h, & \text{ if } \mu_1=n.
\end{cases} 
\]
Since the $V$'s have orthogonal ranges, $V_i^* k$ is either $0$ when $\mu_1\neq n$ or $W \prod_{j=2}^m WV_{\mu_j} h\in W\cL_{m-1}\subset \cL_{m-1}$ when $\mu_1=n$. As a result, $\cH_{uu}$ reduces each $V_i$.

By \cite[Theorem 1.3]{Popescu1989}, $\cH_{uu}$ is the largest reducing subspace for the family $\{V_2,V_3,\cdots,V_n,V_1W\}$ on which this family is a row unitary. By Lemma \ref{lm.obs1}, this implies that $W|_{\cH_{uu}}$ is a unitary and $\{V_1,\cdots,V_n\}|_{\cH_{uu}}$ is a row unitary. If $\cH_0$ is any subspace that reduces $W$ and $V_i$'s such that $W|_{\cH_{0}}$ is a unitary and $\{V_1,\cdots,V_n\}|_{\cH_{0}}$ is a row unitary, by Lemma \ref{lm.obs1}, we have that the family $\{V_2,V_3,\cdots,V_n,V_1W\}$  is a row unitary on $\cH_0$ and thus $\cH_0\subset \cH_{uu}$ by \cite[Theorem 1.3]{Popescu1989}. 
\end{proof}

\begin{remark} We would like to point out that the unitary-row unitary component is often of special interest because it gives rise to representations of the boundary quotient semigroup C*-algebra \cite{BRRW}. 
\end{remark}

\begin{proposition}\label{prop.us} Let \[\cH_{us}=\bigoplus_{\mu\in\mathbb{F}_n^+} V_\mu\left(\bigcap_{m\geq 0} W^m \Big(\bigcap_{i=1}^n \bigcap_{j\geq 0} \ker V_i^* W^j\Big)\right).\]
Then $\cH_{us}$ reduces $W$ and $V_i$'s, $W|_{\cH_{us}}$ is unitary, and $\{V_1,\cdots,V_n\}|_{\cH_{us}}$ is pure. Moreover, $\cH_{us}$ is the largest subspace with this property.
\end{proposition}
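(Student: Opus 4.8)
The plan is to adapt Popovici's argument for Theorem~\ref{thm.Popovici} to the odometer relations. Set $\cN=\bigcap_{i=1}^n\bigcap_{j\ge 0}\ker V_i^*W^j$ and $\cM=\bigcap_{m\ge 0}W^m\cN$, so that $\cH_{us}=\bigoplus_{\mu\in\mathbb{F}_n^+}V_\mu\cM$. First I would establish the two structural facts that make everything run. Since $V_i^*W^j(Wh)=V_i^*W^{j+1}h=0$ for $h\in\cN$, the space $\cN$ is $W$-invariant, so $\cN\supseteq W\cN\supseteq W^2\cN\supseteq\cdots$; as $W$ is an isometry one checks $W^{*m}h\in\cM$ for every $h\in\cM$, hence $\cM=W\cM=W^*\cM$ and $W|_{\cM}$ is unitary. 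On the other hand $\cM\subseteq\cN\subseteq\bigcap_{i}\ker V_i^*$ (take $j=0$), so $V_\mu^*\cM=\{0\}$ for every nonempty $\mu$; combined with the orthogonality of the ranges of the $V_i$'s this shows $\{V_\mu\cM:\mu\in\mathbb{F}_n^+\}$ are pairwise orthogonal, so the sum defining $\cH_{us}$ is honest and $\{V_1,\dots,V_n\}|_{\cH_{us}}$ is a direct sum of copies of the left regular representation of $\mathbb{F}_n^+$, i.e.\ pure.

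I would then check that $\cH_{us}$ reduces the representation. Invariance under each $V_i$ and $V_i^*$ is immediate, since $V_iV_\mu\cM=V_{i\mu}\cM$ while $V_i^*V_\mu\cM$ is $\{0\}$ or of the form $V_\rho\cM$. For $W$-invariance I would use $WV_k=V_{k+1}$ and $WV_n=V_1W$ (so $WV_n^m=V_1^mW$), together with $W\cM=\cM$, to identify $WV_\mu\cM$ with another block $V_\nu\cM$. The delicate part is invariance under $W^*$. Here I would use $W^*V_i=V_{i-1}$ for $2\le i\le n$ (from $V_i=WV_{i-1}$) and $V_1^tW=WV_n^t$ (from iterating $WV_n=V_1W$): writing a word as $\mu=1^tk\sigma$ with $k\ge 2$ gives $V_\mu=V_1^tV_kV_\sigma=V_1^tWV_{k-1}V_\sigma=WV_n^tV_{k-1}V_\sigma$, so $W^*V_\mu\cM=V_n^tV_{k-1}V_\sigma\cM\subseteq\cH_{us}$, while for $\mu=1^t$ one writes $\cM=W\cM$ to get $V_1^t\cM=WV_n^t\cM$, so $W^*V_1^t\cM=V_n^t\cM\subseteq\cH_{us}$. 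These same identities also give $\cH_{us}\subseteq\operatorname{ran}W$, so $W|_{\cH_{us}}$, being an isometry on a reducing subspace with range all of $\cH_{us}$, is unitary.

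For maximality, let $\cH_0$ reduce $W$ and all $V_i$, with $W|_{\cH_0}$ unitary and $\{V_1,\dots,V_n\}|_{\cH_0}$ pure. By Popescu's theorem applied inside $\cH_0$, $\cH_0=\bigoplus_\mu V_\mu\cL_0$ with $\cL_0=\cH_0\cap\bigcap_i\ker V_i^*$, so it suffices to show $\cL_0\subseteq\cM$. I would first show $\cL_0$ reduces $W$: for $\ell\in\cL_0$ and $h\in\cH_0$, $\langle W\ell,V_ih\rangle=\langle\ell,W^*V_ih\rangle$ vanishes because $W^*V_ih=V_{i-1}h$ when $i\ge 2$, while for $i=1$ one writes $h=Wh'$ with $h'\in\cH_0$ and $W^*V_1h=W^*V_1Wh'=V_nh'$; in every case $W^*V_ih$ lies in some $V_k\cH_0\perp\ell$, and the computation for $W^*\ell$ is symmetric using $WV_k=V_{k+1}$ and $WV_n=V_1W$. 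Then $W^j\cL_0\subseteq\cL_0\subseteq\bigcap_i\ker V_i^*$ for all $j$, which is precisely the statement $\cL_0\subseteq\cN$; and since $\ell=W^m(W^{*m}\ell)$ with $W^{*m}\ell\in\cL_0\subseteq\cN$, we conclude $\cL_0\subseteq\bigcap_m W^m\cN=\cM$, whence $\cH_0=\bigoplus_\mu V_\mu\cL_0\subseteq\cH_{us}$.

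The step I expect to be the main obstacle is the $W^*$-invariance of $\cH_{us}$, together with the closely related fact that $\cL_0$ is $W$-reducing. Unlike the $V_i^*$'s and unlike $W$ itself, $W^*$ does not act transparently on the blocks $V_\mu\cM$, because $V_1$ need not have range inside $\operatorname{ran}W$ and there is no simple relation expressing $W^*V_1$. The resolution is to use the unitarity of $W|_{\cM}$ (and of $W|_{\cH_0}$) to supply extra factors of $W$, and then to absorb the stray $W^*$ via $V_1^tW=WV_n^t$ and $W^*V_1W=V_n$; getting the bookkeeping of the leading run of $1$'s (the ``carry'') correct is where the care lies.
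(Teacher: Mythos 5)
Your proof is correct and follows essentially the same route as the paper's: realize $\cH_{us}$ as $\bigoplus_\mu V_\mu\cM$ with $\cM$ wandering for the row isometry, show $W|_{\cM}$ is unitary and track the ``carry'' action of $W$ and $W^*$ on the blocks $V_\mu\cM$, and for maximality use Popescu's decomposition of $\cH_0$ and show its wandering subspace $\cL_0$ reduces $W$ and is contained in $\cM$. The only cosmetic difference is that the paper records explicitly that $W^*V_1=V_nW^*$ holds on $\cH_0$ (i.e.\ the restriction is automatically Nica-covariant) before computing $V_i^*W\,h$ and $V_i^*W^*h$, whereas you obtain the same identities by inserting $h=Wh'$ with $h'\in\cH_0$.
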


\begin{proof} Let $\cL=\bigcap_{m\geq 0} W^m \Big(\bigcap_{i=1}^n \bigcap_{j\geq 0} \ker V_i^* W^j\Big)$. It is clear that for each $1\leq i\leq n$, $V_i^* \cL=\{0\}$. Therefore, $\cL\subset \bigcap_{i=1}^n \ker V_i^*$. By \cite[Theorem 1.3]{Popescu1989}, $\cL$ is a wandering subspace for $\{V_1,\cdots,V_n\}$. Therefore, $\cH_{us}$ reduces each $V_i$ and $\{V_1,\cdots,V_n\}|_{\cH_{us}}$ is pure. 

To see it reduces $W$, first notice that $\cL\subset \bigcap_{m\geq 0} W^m \cH$ and thus by the Wold decomposition of a single isometry, $\cL$ reduces $W$ and $W|_\cL$ is unitary. Therefore, $\cL=W\cL$. For each $\mu\in\mathbb{F}_n^+$, we have
\[WV_\mu \cL = \begin{cases}
V_{\mu_1+1}V_{\mu_2}\cdots V_{\mu_m} \cL, & \text{ if } \mu_1\neq n; \\
V_1 V_{\mu_2+1}\cdots V_{\mu_m} \cL, & \text{ if } \mu_1=n, \mu_2\neq n; \\
& \vdots \\
V_1\cdots V_1 W\cL, & \text{ if } \mu_1=\mu_2=\cdots=\mu_m=n;
\end{cases}
\]
and,
\[W^*V_\mu \cL = \begin{cases}
V_{\mu_1-1}V_{\mu_2}\cdots V_{\mu_m} \cL, & \text{ if } \mu_1\neq 1; \\
V_n V_{\mu_2-1}\cdots V_{\mu_m} \cL, & \text{ if } \mu_1=1, \mu_2\neq 1; \\
& \vdots \\
V_n\cdots V_n \cL, & \text{ if } \mu_1=\mu_2=\cdots=\mu_m=1.
\end{cases}
\]
Here, in the last case, $W^* V_1^m \cL=W^* V_1^m W\cL=V_n^m \cL$. As a result, $\cH_{us}$ reduces $W$. It is also clear from the computation that for each $m\geq 0$, $W$ is unitary on $\bigoplus_{|\mu|=m} V_\mu \cL$.  Therefore, $W$ is unitary on $\cH_{us}$. 

Suppose now $\cH_0$ is another reducing subspace on which $W$ is unitary and $\{V_1,\cdots,V_n\}$ is pure. Apply \cite[Theorem 1.3]{Popescu1989}, we have that $\cH_0=\bigoplus_{\mu\in\mathbb{F}_n^+} V_\mu \cL_0$ where $\cL_0=\bigcap_{i=1}^n \ker(V_i^*)\cap \cH_0$ is a wandering subspace for $\{V_1,\cdots,V_n\}$ that generate $\cH_0$. We now prove that $\cL_0\subset \cL$. 

We first observe that, since $W$ is unitary on $\cH_0$, 
\[W^* (W V_n) W^* = W^* (V_1 W) W^*.\]
It implies that  on $\cH_0$, $V_n W^* = W^* V_1$ and thus $V_1^* W=W V_n^*$. In other words, the representation is in fact Nica-covariant on $\cH_0$. 

Let us prove that $\cL_0$ reduces $W$. Take any $h\in\cL_0$, we need to show that $Wh, W^*h\in \ker(V_i^*)$ for all $1\leq i\leq n$. We have:
\[
V_i^* W h = \begin{cases}
V_{i-1}^* h, \text{ if } i\neq 1;\\
W V_n^* h, \text{ if } i=1;
\end{cases}
\]
In either case, since $h\in \ker(V_i^*)$ for all $1\leq i\leq n$, we have that $V_i^* Wh=0$ for all $i$ as well. Furthermore, we have:
\[
V_i^* W^* h = \begin{cases}
V_{i+1}^* h, \text{ if } i\neq n;\\
W^* V_1^* h, \text{ if } i=n;
\end{cases}
\]
Again, we have that $V_i^* W^* h=0$ in either case. This proves that $\cL_0$ reduces $W$. 

Finally, let us prove that $\cL_0\subset \cL$. Pick any $h\in \cL_0$. For any $m\geq 0$, since $W$ is unitary on $\cH_0$, $h=W^m W^{*m} h$. From the definition of $\cL$, it suffices to show that $W^{*m}h \in \bigcap_{i=1}^n \bigcap_{j\geq 0} \ker V_i^* W^j$.
 Equivalently, it suffices to show that $W^{*m}h \in \ker V_i^* W^j$  for all $1\leq i\leq n$ and $j\geq 0$, which is also equivalent to showing that $W^j W^{*m} h \in \ker V_i^*$ for all $1\leq i\leq n$ and $j\geq 0$. Since $\cL_0$ reduces $W$, the vector $k=W^j W^{*m} h\in \cL_0$ for all $j\geq 0$ and $m\geq 0$. By the definition of $\cL_0$, $k\in \ker(V_i^*)$ for all $1\leq i\leq n$. Therefore, we can prove that $\cL_0\subset \cL$ and thus $\cH_0\subset \cH_{us}$, establishing the maximality of $\cH_{us}$. \end{proof}

\begin{remark}\label{rm.us1} From the proof of Proposition \ref{prop.us}, we can also conclude that for each $m\geq 0$, the $m$-th graded subspace,
\[\cH_{us}^{(m)}=\bigoplus_{|\mu|=m} V_\mu\left(\bigcap_{m\geq 0} W^m \Big(\bigcap_{i=1}^n \bigcap_{j\geq 0} \ker V_i^* W^j\Big)\right),\] reduces $W$. Moreover, $W$ is unitary on each of $\cH_{us}^{(m)}$. In fact, we show that $W$ is uniquely determined by $W|_{\cH_{us}^{(0)}}$. 
\end{remark}

\begin{corollary}\label{cor.us} Suppose $\{V_1,\cdots,V_n\}$ is a pure row isometry on $\cH$ that is generated by a wandering space $\cL$. Then each unitary operator $W_0\in \bh{L}$ determines a unique unitary operator $W$ on $\cH$ such that $W|_\cL=W_0$ and $\{W,V_1,\cdots,V_n\}$ is an isometric representation of the odometer semigroup $\odometer{n}$. Conversely, every unitary-pure row isometry representation arises in this manner. 
\end{corollary}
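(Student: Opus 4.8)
The plan is to establish three things in turn: uniqueness of $W$, existence of $W$, and the converse; throughout I use the orthogonal decomposition $\cH=\bigoplus_{\mu\in\mathbb{F}_n^+}V_\mu\cL$ coming from purity of $\{V_1,\dots,V_n\}$ via \cite[Theorem 1.3]{Popescu1989}, where juxtaposition of letters denotes concatenation in $\mathbb{F}_n^+$ and $\cL=\bigcap_{i}\ker V_i^{*}$.

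First, uniqueness. Suppose $\{W,V_1,\dots,V_n\}$ is an isometric representation of $\odometer{n}$ with $W|_\cL=W_0$. Write an arbitrary $\mu\in\mathbb{F}_n^+$ either as $\mu=n^{j}k\nu'$ with $1\le k\le n-1$, or as $\mu=n^{j}$ when $\mu$ consists solely of the letter $n$. Applying $WV_i=V_{i+1}$ and $WV_n=V_1W$ repeatedly — that is, carrying the $W$ past the initial block of $n$'s — gives $WV_\mu\ell=V_{1^{j}(k+1)\nu'}\ell$ in the first case and $WV_\mu\ell=V_1^{j}W_0\ell$ in the second, for every $\ell\in\cL$. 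Since the subspaces $V_\mu\cL$ span $\cH$, this pins down $W$ from $W_0$; this is the statement already recorded in Remark \ref{rm.us1} specialized to $\cH_{us}=\cH$.

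Next, existence. I would take the formula just derived as the definition of $W$. Let $\sigma\colon\mathbb{F}_n^+\to\mathbb{F}_n^+$ and $\epsilon\colon\mathbb{F}_n^+\to\{0,1\}$ be given by $\sigma(n^{j}k\nu')=1^{j}(k+1)\nu'$, $\epsilon(n^{j}k\nu')=0$ for $1\le k\le n-1$, and $\sigma(n^{j})=1^{j}$, $\epsilon(n^{j})=1$; these are exactly the data appearing in the normal-form identity $wv_\mu=v_{\sigma(\mu)}w^{\epsilon(\mu)}$ in $\odometer{n}$. Define
\[
W\Big(\sum_{\mu}V_\mu\ell_\mu\Big)=\sum_{\mu}V_{\sigma(\mu)}W_0^{\epsilon(\mu)}\ell_\mu,\qquad \ell_\mu\in\cL.
\]
The crucial point is that $\sigma$ is a bijection of $\mathbb{F}_n^+$: its inverse sends $1^{j}$ to $n^{j}$, and a word $1^{j}\ell\rho'$ with $2\le\ell\le n$ to $n^{j}(\ell-1)\rho'$. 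Consequently $W$ maps each summand $V_\mu\cL$ unitarily onto $V_{\sigma(\mu)}\cL$, hence is a well-defined unitary on $\cH$ with $W|_\cL=W_0$. That $\{W,V_1,\dots,V_n\}$ is then an isometric representation follows from $\sigma(k\mu)=(k+1)\mu$, $\epsilon(k\mu)=0$ for $1\le k\le n-1$ (giving $WV_k=V_{k+1}$) and $\sigma(n\mu)=1\,\sigma(\mu)$, $\epsilon(n\mu)=\epsilon(\mu)$ (giving $WV_n=V_1W$).

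Finally, the converse. Let $\{W,V_1,\dots,V_n\}$ be an isometric representation of $\odometer{n}$ with $W$ unitary and $\{V_1,\dots,V_n\}$ pure, so $\cH=\bigoplus_\mu V_\mu\cL$ with $\cL=\bigcap_i\ker V_i^{*}$. Since $W$ is unitary, the relations furnish the identities $V_i^{*}W=V_{i-1}^{*}$, $V_1^{*}W=WV_n^{*}$, $V_i^{*}W^{*}=V_{i+1}^{*}$, $V_n^{*}W^{*}=W^{*}V_1^{*}$ (in the evident ranges of $i$), and feeding these into the computation from the maximality part of the proof of Proposition \ref{prop.us} shows that $\cL$ reduces $W$. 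Thus $W_0:=W|_\cL$ is a unitary on $\cL$, and by the uniqueness already established $W$ coincides with the operator built from $W_0$. The step I expect to require the most care is the combinatorics in the existence part: one must check that $\sigma$ really is a bijection and correctly identify the words on which the carry $\epsilon(\mu)=1$ arises — precisely $\mu\in\{\varnothing,n,nn,\dots\}$ — since it is only there that $W_0$ enters the formula, and any slip would break either the unitarity of $W$ or the relation $WV_n=V_1W$.
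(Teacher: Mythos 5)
Your proposal is correct and follows essentially the same route as the paper: decompose $\cH=\bigoplus_\mu V_\mu\cL$, define $W$ on each $V_\mu\cL$ by the formula forced by the relations (with $W_0$ entering exactly on the words $n^j$), observe $W$ is unitary on each graded piece, and for the converse use that $\cL$ reduces $W$. Your explicit verification that $\sigma$ is a length-preserving bijection of $\mathbb{F}_n^+$ merely fills in the paper's ``one can easily see'' step; there is no substantive difference.
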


\begin{proof} Since $\cL$ is wandering for $V_i$, $\cH=\bigoplus_{\mu\in\mathbb{F}_n^+} V_\mu \cL$. For each $\mu=\mu_1\cdots\mu_m \in \mathbb{F}_n^+$ and each $h\in\cL$, define
\[W V_\mu h = \begin{cases}
V_{(\mu_1+1)\mu_2\cdots \mu_m} h , & \text{ if } \mu_1\neq n, \\
V_{1(\mu_2+1)\mu_3\cdots \mu_m} h, & \text{ if } \mu_1=n, \mu_2\neq n, \\
& \vdots \\
V_{\underbrace{1\cdots 1}_{m}} W_0 h, & \text{ if } \mu_1=\cdots = \mu_m=n. \\
\end{cases}
\]
In particular, for $m=0$, define $Wh=W_0h$ for all $h\in \cL$. One can easily see that for each $m$, $\bigoplus_{|\mu|=m} V_\mu \cL$ reduces $W$, and $W$ is unitary on this subspace. Moreover, from the construction, we also have $WV_k=V_{k+1}$ for all $1\leq k\leq n-1$ and $WV_n=V_1W$. Therefore, we have $\{W,V_1,\cdots,V_n\}$ is a unitary-pure row isometry representation of $\odometer{n}$. One may notice that the definition of $W$ on each vector $V_\mu h$ is forced by the relations on $\odometer{n}$, and thus $W$ is unique. 

Conversely, if $\{W,V_1,\cdots,V_n\}$ is a unitary-pure row isometry representation of $\odometer{n}$, then $\cH=\cH_{us}=\bigoplus_{\mu\in\mathbb{F}_n^+} V_\mu \cL$ for some wandering subspace $\cL$ of $\{V_1,\cdots,V_n\}$. Define $W_0=W|_\cL$, and as observed in Remark \ref{rm.us1}, $W_0$ is unitary since $\cL$ reduces $W$. One can easily verify that we can recover $W$ from $W_0$.
\end{proof}

\begin{proposition}\label{prop.su} Let \[\cH_{su}=\bigoplus_{k\geq 0} W^k \left(\bigcap_{m\geq 0} V_1^m \Big(\bigcap_{j\geq 0} \ker W^* V_1^j\Big)\right).\]
Then $\cH_{su}$ reduces $W$ and $V_i$'s, $W|_{\cH_{su}}$ is pure and $\{V_1,\cdots,V_n\}|_{\cH_{su}}$ is a row unitary. Moreover, $\cH_{su}$ is the largest subspace  with this property.
\end{proposition}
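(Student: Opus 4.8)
The plan is to follow the two-part pattern already used for Propositions~\ref{prop.uu} and~\ref{prop.us}: first verify that $\cH_{su}$ itself has the stated properties, and then prove maximality by showing that every competing reducing subspace is contained in it. It is convenient to write $\cN=\bigcap_{j\ge 0}\ker W^*V_1^j$ and $\cM=\bigcap_{m\ge 0}V_1^m\cN$, so that $\cH_{su}=\bigoplus_{k\ge 0}W^k\cM$, and to record two consequences of the odometer relations used throughout: $V_i=W^{i-1}V_1$ for $1\le i\le n$, and $W^nV_1=V_1W$, whence $V_1W^k=W^{nk}V_1$ and $V_iW^k=W^{nk+i-1}V_1$ for every $k\ge 0$.

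For the forward direction I would first study the generating subspace $\cM$. Chasing the definitions shows that $\cM$ reduces $V_1$ (if $\xi=V_1^{m+1}\eta$ with $\eta\in\cN$ then $V_1^*\xi=V_1^m\eta\in V_1^m\cN$, and an analogous computation gives $V_1\xi\in\cM$), and moreover $V_1\cM=\cM$ since $\cM\subseteq V_1\cN$. Because $\cM\subseteq\ker W^*$ (the $m=j=0$ instance), $\cM$ is wandering for the isometry $W$, so $\cH_{su}=\bigoplus_k W^k\cM$ is a genuine orthogonal sum on which $W$ is a direct sum of unilateral shifts; thus $W|_{\cH_{su}}$ is pure, and $\cH_{su}$ reduces $W$ as $W^*W^k\cM=W^{k-1}\cM$ for $k\ge1$ and $W^*\cM=\{0\}$. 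Next, $V_iW^k\cM=W^{nk+i-1}\cM$, and since $nk+i-1$ runs bijectively over $\mathbb{N}$ as $(i,k)$ runs over $\{1,\dots,n\}\times\mathbb{N}$ (division with remainder), the pairwise orthogonal ranges $V_i\cH_{su}$ satisfy $\bigoplus_{i=1}^n V_i\cH_{su}=\bigoplus_{l\ge0}W^l\cM=\cH_{su}$. Hence each $V_i$ preserves $\cH_{su}$; writing an arbitrary $\eta\in\cH_{su}$ as $\sum_j V_j\zeta_j$ with $\zeta_j\in\cH_{su}$ and using orthogonality of ranges gives $V_i^*\eta=\zeta_i\in\cH_{su}$ and $\sum_i V_iV_i^*\eta=\eta$, so $\cH_{su}$ reduces every $V_i$ and $\{V_1,\dots,V_n\}|_{\cH_{su}}$ is a row unitary.

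For maximality, let $\cH_0$ reduce $W$ and all $V_i$ with $W|_{\cH_0}$ pure and $\{V_i\}|_{\cH_0}$ a row unitary. The Wold decomposition of $W|_{\cH_0}$ gives $\cH_0=\bigoplus_k W^k\cL_0$ with $\cL_0=\cH_0\cap\ker W^*$ wandering for $W$, and row unitarity gives $\cH_0=\bigoplus_{i=1}^n V_i\cH_0$. Applying $W$ to the latter and using $WV_i=V_{i+1}$, $WV_n=V_1W$ yields $W\cH_0=\big(\bigoplus_{i=2}^n V_i\cH_0\big)\oplus V_1W\cH_0$, hence
\[
\cH_0=V_1\cH_0\oplus\Big(\bigoplus_{i=2}^n V_i\cH_0\Big)=\cL_0\oplus V_1W\cH_0\oplus\Big(\bigoplus_{i=2}^n V_i\cH_0\Big),
\]
so $V_1\cH_0=\cL_0\oplus V_1W\cH_0$; since $V_1$ is an isometry we also have $V_1\cH_0=V_1\cL_0\oplus V_1W\cH_0$, and comparing the two forces $V_1\cL_0=\cL_0$. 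Thus $\cL_0$ reduces $V_1$ with $V_1|_{\cL_0}$ unitary, so for $\xi\in\cL_0$ we get $V_1^j\xi\in\cL_0\subseteq\ker W^*$ (hence $\xi\in\cN$) and $V_1^{*m}\xi\in\cL_0\subseteq\cN$ with $V_1^m(V_1^{*m}\xi)=\xi$ (hence $\xi\in V_1^m\cN$) for all $j,m\ge0$; therefore $\cL_0\subseteq\cM$ and $\cH_0=\bigoplus_k W^k\cL_0\subseteq\cH_{su}$.

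The step I expect to be the genuine obstacle is the identity $V_1\cL_0=\cL_0$ in the maximality argument. In the analogous place in Proposition~\ref{prop.us} one uses that $W$ is unitary on the competing subspace to conclude that the representation is Nica-covariant there; here $W$ is only pure, so that route is unavailable, and the required information must instead be squeezed out of the row-unitarity of $\{V_i\}$ by playing the Wold decomposition of $W$ against the decomposition $\sum_i V_iV_i^*=I$. Once $V_1\cL_0=\cL_0$ is in hand, the remainder is routine bookkeeping with the definitions of $\cM$ and $\cN$ and the odometer relations.
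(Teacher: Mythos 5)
Your proof is correct, and although it shares the paper's overall skeleton (verify the stated properties of $\cH_{su}$ directly, then prove maximality by showing the $W$-wandering space $\cL_0$ of any competing reducing subspace lies inside the generating space $\cM$), both halves are carried out by genuinely different means. In the forward direction the paper checks that $\cH_{su}$ reduces each $V_i$ by a case analysis on vectors $V_iW^kh=V_iV_\mu x$; you instead combine $V_iW^k=W^{nk+i-1}V_1$ with $V_1\cM=\cM$ and the division-with-remainder bijection $(i,k)\mapsto nk+i-1$ to obtain $\bigoplus_{i=1}^nV_i\cH_{su}=\bigoplus_{l\ge0}W^l\cM=\cH_{su}$ in one stroke, which delivers $V_i$-invariance, $V_i^*$-invariance, and row unitarity simultaneously. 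In the maximality step the paper establishes $\sum_{|\mu|=m}V_\mu V_\mu^*=I$ on $\cH_0$, reduces the problem to showing that $\operatorname{ran}W\cap\cH_0$ reduces $V_1$, and computes $V_1^*Wh=WV_n^*h$ from row unitarity; you reach the same pivotal identity $V_1\cL_0=\cL_0$ by comparing two orthogonal decompositions of $\cH_0$ --- one from $\cH_0=\bigoplus_iV_i\cH_0$, the other from $\cH_0=\cL_0\oplus W\cH_0$ together with $W\cH_0=\bigl(\bigoplus_{i=2}^nV_i\cH_0\bigr)\oplus V_1W\cH_0$ --- and cancelling the common summands. Your version is somewhat more economical and makes the underlying mechanism (row unitarity forcing $V_1$ to act unitarily on the $W$-wandering space) more transparent, while the paper's computation is more explicit about the action of the adjoints; both arguments are complete, and the final bookkeeping showing $\cL_0\subseteq\cN$ and $\cL_0\subseteq V_1^m\cN$ for all $m$ is the same in each.
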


\begin{proof} Let $\cL=\bigcap_{m\geq 0} V_1^m \Big(\bigcap_{j\geq 0} \ker W^* V_1^j\Big)$.  It is clear that $W^* \cL=\{0\}$ and thus $\cL\subset \ker W^*$. By the Wold decomposition of a single isometry, $\cL$ is a wandering subspace for $W$, and thus $\cH_{su}$ reduces $W$ and $W|_{\cH_{su}}$ is pure. 

Next, we show that $\cL$ reduces $V_1$. First, $\bigcap_{j\geq 0} \ker W^* V_1^j$ is clearly invariant under $V_1$ and thus $\cL$ is invariant under $V_1$. For any $h\in \cL$ and any $m\geq 0$, there exists $x\in \bigcap_{j\geq 0} \ker W^* V_1^j$ such that $h=V_1^{m+1} x$. Therefore, $V_1^* h=V_1^m x\in V_1^m \big(\bigcap_{j\geq 0} \ker W^* V_1^j\big)$. This proves that $\cL$ reduces $V_1$.

To see it reduces $V_i$, pick any $k\geq 0$, $h\in \cL$, and consider $W^k h\in \cH_{su}$. There exists an $m\geq 0$ such that $W^k V_1^m = V_\mu$ for some $|\mu|=m$. By the definition of $\cL$, there exists $x\in \bigcap_{j\geq 0} \ker W^* V_1^j$ such that $h=V_1^m x$. 
We have
\[V_i W^k h = V_i V_\mu x = W^{k'} V_1^{m+1} x = W^{m'} V_1 h.\]
Since $\cL$ is invariant for $V_1$, we have $W^{k'} V_1 h\in W^{k'} \cL\subset \cH_{su}$. In addition, we have $V_i^* W^k h = V_i^* V_\mu x$. This is either $0$ when $\mu_1\neq i$ or $V_{\mu_2}\cdots V_{\mu_m} x = W^{k''} V_1^{m-1} x = W^{k''} V_1^* h$. Again, since $\cL$ is invariant for $V_1^*$, $W^{k''} V_1^* h\in W^{k''}\cL \subset \cH_{su}$. Therefore, $\cH_{su}$ reduces $V_i$. Following the computation above can also easily establish that $\sum_{i=1}^n V_i V_i^*=I$ on $\cH_{su}$ because each $W^k h= V_\mu x$ is in the range of some $V_i$. 

Suppose now $\cH_0$ is another reducing subspace on which $W$ is a pure isometry and $\{V_1,\cdots,V_n\}$ is a  row unitary. By the Wold decomposition for a single isometry, $\cH_0=\oplus_{k\geq 0} W^k \cL_0$ where $\cL_0=\ker W^* \cap \cH_0$. It suffices to show that $\cL_0\subset \cL$. First, since $\{V_1,\cdots,V_n\}$ is a row unitary on $\cH_0$, when restricted on $\cH_0$, we have that $\sum_{i=1}^m V_i V_i^*=I$. Furthermore, on $\cH_0$, we have that:
\[I=\sum_{i=1}^m V_i (\sum_{j=1}^m V_j V_j^*) V_i^*=\sum_{i,j} V_{ij} V_{ij}^*.\]
Inductively, we have that for each $m\geq 1$, on $\cH_0$, 
\[\sum_{|\mu|=m} V_\mu V_\mu^*=I.\]
In particular, for any $h\in \ker W^*\cap \cH_0$ and $m\geq 1$, 
$h=\sum_{|\mu|=m} V_\mu V_\mu^* h$. Each $V_\mu = W^k V_1^m$ and $V_\mu V_\mu^* h=0$, except when $\mu=1\cdots 1$ and $V_\mu=V_1^m$. Therefore, $h=V_1^m V_1^{*m} h$ for all $m\geq 1$. It now suffices to show that $V_1^{*m} h\in \bigcap_{j\geq 0} \ker W^* V_1^j$ for all $m\geq 0$. Equivalently, it suffices to show that $W^* V_1^j V_1^{*m} h=0$ for all $m\geq 0$ and $j\geq 0$. Since $V_1^m V_1^{*m}h=h$ for all $m\geq 0$, we have that $V_1^j V_1^{*m}h=V_1^{j-m}h$ when $j\geq m$ and $V_1^j V_1^{*m}h=V_1^{*(m-j)} V_1^m V_1^{*m}h=V_1^{*(m-j)}h$ when $j<m$. It suffices to show that $W^* V_1^k h=0$ and $W^* V_1^{*k} h=0$ for all $k\geq 0$. We have,
\begin{align*}
 &  \mbox{  } W^* V_1^k h=0 \text{ and } W^* V_1^{*k} h=0 \text{ for all }k\geq 0, \\
\Longleftrightarrow &  \mbox{  } V_1^k h, V_1^{*k}h \in \ker W^* \text{ for all }k\geq 0, \\
\Longleftrightarrow &  \mbox{  } \ker W^* \cap \cH_0 \text{ reduces } V_1,\\
\Longleftrightarrow &  \mbox{  } \operatorname{ran} W \cap \cH_0 \text{ reduces } V_1.
\end{align*}
Pick $h\in\cH_0$ and $Wh\in \operatorname{ran} W \cap \cH_0=\operatorname{ran} W|_{\cH_0}$, we have that $V_1 Wh=WV_n h \in \operatorname{ran} W|_{\cH_0}$. Since $\{V_1,\cdots,V_n\}$ is a row unitary on $\cH_0$, we have $h=\sum_{i=1}^n V_i V_i^* h$. Therefore, 
\begin{align*}
    V_1^* Wh &= V_1^* W \sum_{i=1}^n V_i V_i^* h \\
    & = V_1^* (V_2V_1^* + V_3 V_2^* + \cdots + V_1 W V_n^*) h \\
    &= W V_n^* h \in  \operatorname{ran} W|_{\cH_0}. 
\end{align*}
Therefore, $\operatorname{ran} W|_{\cH_0}$ reduces $V_1$ and thus $\cL_0 \subset \cL$ and $\cH_0\subset \cH_{su}$, establishing the maximality of $\cH_{su}$. 
\end{proof}

\begin{remark} One may notice the space $\cH_{su}$ has seemingly the same definition as the $\cH_{su}$-component in Popovici's result for the pair $(W,V_1)$. However, the fundamental difference is that $W$ and $V_1$ satisfies $W^n V_1=V_1W$ and they do not commute unless $n=1$. Therefore, one cannot apply Popovici's result directly here.  
\end{remark}

Finally, we extend Popovici's notion of weak bi-shift to our context.

\begin{definition}\label{defn.ws} We say an isometric representation $\{W,V_1,\cdots,V_n\}$ is a \emph{weak bi-shift} if the operators $W|_{\bigcap_{i=1}^n \bigcap_{j\geq 0} \ker V_i^* W^j}$ and $V_1|_{\bigcap_{j\geq 0} \ker W^* V_1^j}$ are pure isometries and the family $\{V_2,\cdots,V_n,V_1W\}$ is a pure row isometry.
\end{definition}

\begin{theorem}\label{thm.Wold} Let $\{W,V_1,V_2,\cdots,V_n\}$ be an isometric representation of the odometer semigroup $\odometer{n}$ on some Hilbert space $\cH$. Then $\cH$ decomposes uniquely as
\[\cH=\cH_{uu}\oplus \cH_{us} \oplus \cH_{su} \oplus \cH_{ws},\]
such that,
\begin{enumerate}
    \item The subspaces $\cH_{uu}, \cH_{us}, \cH_{su}, \cH_{ws}$ are reducing for $W$ and $V_i$.
    \item The $W|_{\cH_{uu}}$ is unitary and $\{V_1,\cdots,V_n\}|_{\cH_{uu}}$ is a row unitary.
    \item The $W|_{\cH_{us}}$ is unitary and $\{V_1,\cdots,V_n\}|_{\cH_{us}}$ is pure.
    \item The $W|_{\cH_{su}}$ is pure, and $\{V_1,\cdots,V_n\}|_{\cH_{su}}$ is a row unitary.
    \item The family $\{W|_{\cH_{ws}}, V_1|_{\cH_{ws}}, \cdots, V_n|_{\cH_{ws}}\}$ is a weak bi-shift.
\end{enumerate}
\end{theorem}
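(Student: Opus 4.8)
The plan is to let $\cH_{uu}$, $\cH_{us}$, $\cH_{su}$ be the three spaces already produced by Propositions \ref{prop.uu}, \ref{prop.us} and \ref{prop.su}, to \emph{define} $\cH_{ws}:=\cH\ominus(\cH_{uu}\oplus\cH_{us}\oplus\cH_{su})$, and then to prove two things: that this really is an orthogonal decomposition into reducing subspaces with properties (1)--(5), and that it is the only such decomposition. For the pairwise orthogonality of $\cH_{uu},\cH_{us},\cH_{su}$ I would invoke the single-isometry Wold decomposition $\cH=\cH^W_u\oplus\cH^W_s$ of $W$ and the Popescu decomposition $\cH=\cH^V_u\oplus\cH^V_s$ of the row isometry $\{V_1,\dots,V_n\}$, each of which splits $\cH$ into a \emph{largest} reducing subspace of the relevant type and its complement. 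Since $W$ is unitary on $\cH_{uu}$ and $\cH_{us}$ and pure on $\cH_{su}$, maximality gives $\cH_{uu},\cH_{us}\subseteq\cH^W_u$ and $\cH_{su}\subseteq\cH^W_s$; since $\{V_i\}$ is row unitary on $\cH_{uu}$ and pure on $\cH_{us}$, it gives $\cH_{uu}\subseteq\cH^V_u$ and $\cH_{us}\subseteq\cH^V_s$. Hence $\cH_{uu}\perp\cH_{us}$, $\cH_{uu}\perp\cH_{su}$ and $\cH_{us}\perp\cH_{su}$, so $\cH_{uu}\oplus\cH_{us}\oplus\cH_{su}$, and therefore $\cH_{ws}$, is reducing for $W$ and each $V_i$.

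The substantive part of existence is checking that the restriction to $\cH_{ws}$ is a weak bi-shift. Here I would use the monotonicity that, for any $W$-invariant $\cE\subseteq\cH$, $\bigcap_m W^m(\cE\cap\cH_{ws})\subseteq\bigcap_m W^m\cE$ (and the analogue with $V_1$ in place of $W$), together with $\bigcap_m\bigoplus_{|\mu|=m}\big(\prod_{j=1}^{m}WV_{\mu_j}\big)\cH_{ws}\subseteq\bigcap_m\bigoplus_{|\mu|=m}\big(\prod_{j=1}^{m}WV_{\mu_j}\big)\cH=\cH_{uu}$. Taking $\cE=\bigcap_i\bigcap_j\ker V_i^*W^j$, the proof of Proposition \ref{prop.us} shows $\bigcap_m W^m\cE\subseteq\cH_{us}$, so the unitary part of $W$ on $\cE\cap\cH_{ws}$ lies in $\cH_{us}\cap\cH_{ws}=\{0\}$; taking $\cE=\bigcap_j\ker W^*V_1^j$, the proof of Proposition \ref{prop.su} shows $\bigcap_m V_1^m\cE\subseteq\cH_{su}$, so the unitary part of $V_1$ on $\cE\cap\cH_{ws}$ lies in $\cH_{su}\cap\cH_{ws}=\{0\}$; and the row-unitary part of $\{V_2,\dots,V_n,V_1W\}=\{WV_i\}$ on $\cH_{ws}$ lies in $\cH_{uu}\cap\cH_{ws}=\{0\}$. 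These are exactly the three hypotheses of Definition \ref{defn.ws}, so (5) holds on $\cH_{ws}$, completing existence.

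For uniqueness, suppose $\cH=\cK_{uu}\oplus\cK_{us}\oplus\cK_{su}\oplus\cK_{ws}$ also satisfies (1)--(5). The maximality clauses give $\cK_{uu}\subseteq\cH_{uu}$, $\cK_{us}\subseteq\cH_{us}$, $\cK_{su}\subseteq\cH_{su}$, so it remains to prove the reverse inclusions, for which it suffices to show that each of $\cH_{uu},\cH_{us},\cH_{su}$ is orthogonal to the three \emph{other} $\cK$-components. Most of these orthogonalities follow from the same Wold/Popescu maximality used above (for instance $\cH_{us}\subseteq\cH^W_u\perp\cH^W_s\supseteq\cK_{su}$, and $\cK_{ws}$, on which $\{WV_i\}$ is pure, is contained in the pure part $\cH\ominus\cH_{uu}$ of $\{WV_i\}$, giving $\cH_{uu}\perp\cK_{ws}$). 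The two remaining orthogonalities, $\cH_{us}\perp\cK_{ws}$ and $\cH_{su}\perp\cK_{ws}$, I would obtain as follows. Let $P$ be the projection onto $\cK_{ws}$; since $\cK_{ws}$ reduces $W$ and each $V_i$, $P$ commutes with $W,W^*,V_i,V_i^*$. Then $P$ maps $\bigcap_i\bigcap_j\ker V_i^*W^j$ into $\bigcap_i\bigcap_j\ker V_i^*W^j\cap\cK_{ws}$, hence $P\big(\bigcap_m W^m\bigcap_i\bigcap_j\ker V_i^*W^j\big)\subseteq\bigcap_m W^m\big(\bigcap_i\bigcap_j\ker V_i^*W^j\cap\cK_{ws}\big)=\{0\}$ by the first purity condition of Definition \ref{defn.ws} applied to $\cK_{ws}$. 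Thus the wandering space $\cL$ generating $\cH_{us}$ is orthogonal to $\cK_{ws}$, and since $\cK_{ws}^{\perp}$ is reducing for the $V_i$, $\cH_{us}=\bigoplus_\mu V_\mu\cL\perp\cK_{ws}$. The argument for $\cH_{su}\perp\cK_{ws}$ is identical, using $V_1$, $\bigcap_j\ker W^*V_1^j$ and the second purity condition. Assembling all the inclusions forces $\cK_{uu}=\cH_{uu}$, $\cK_{us}=\cH_{us}$, $\cK_{su}=\cH_{su}$, and then $\cK_{ws}=\cH_{ws}$ by complementation.

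I expect the main obstacle to be exactly the pair of orthogonalities $\cH_{us}\perp\cK_{ws}$, $\cH_{su}\perp\cK_{ws}$ in the uniqueness part: on a weak bi-shift $W$ need not be pure (as in S\l oci\'{n}ski's example), so there is no Wold decomposition of $W$ to lean on, and a priori these subspaces can overlap. The device that gets around this is that the projection onto $\cK_{ws}$ intertwines $W$ and all the $V_i$, which lets one push the explicit intersection formulas defining the wandering spaces of $\cH_{us}$ and $\cH_{su}$ through the projection and then annihilate them using the two one-variable purity conditions built into the definition of a weak bi-shift.
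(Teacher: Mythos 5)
Your proposal is correct and follows essentially the same route as the paper: take $\cH_{uu},\cH_{us},\cH_{su}$ from Propositions \ref{prop.uu}--\ref{prop.su}, define $\cH_{ws}$ as their orthogonal complement, deduce the three purity conditions of Definition \ref{defn.ws} from the orthogonality of $\cH_{ws}$ to each maximal component, and get uniqueness from maximality. The only difference is that you spell out the step the paper dismisses as ``easily established'' --- the orthogonality of a competing weak bi-shift component to $\cH_{us}$ and $\cH_{su}$ via the commuting projection onto $\cK_{ws}$ --- and that fill-in is sound.
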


\begin{proof} Define $\cH_{uu}$, $\cH_{us}$, and $\cH_{su}$ as in Proposition \ref{prop.uu}, Proposition \ref{prop.us}, and Proposition \ref{prop.su} respectively. Each is a reducing subspace such that conditions (2) through (4) hold. Let $\cH_{ws}$ be the orthogonal complement to $\cH_{uu}\oplus \cH_{us} \oplus \cH_{su}$, which must be reducing, proving condition (1). Since $\cH_{uu}$ is the largest subspace such that the family $\{V_2, \cdots, V_n, V_1W\}$ is a row unitary, this family is pure on $\cH_{uu}^\perp\supset \cH_{ws}$. By $\cH_{ws}\perp \cH_{us}$, we have that \[\cH_{ws}\perp \bigcap_{m\geq 0} W^m \Big(\bigcap_{i=1}^n \bigcap_{j\geq 0} \ker V_i^* W^j\Big).\]
The space $\bigcap_{i=1}^n \bigcap_{j\geq 0} \ker V_i^* W^j$ is clearly invariant for $W$ and consider the Wold decomposition for the single isometry $W|_{\bigcap_{i=1}^n \bigcap_{j\geq 0} \ker V_i^* W^j}$, its unitary component corresponds to $\bigcap_{m\geq 0} W^m \Big(\bigcap_{i=1}^n \bigcap_{j\geq 0} \ker V_i^* W^j\Big)$ which is orthogonal to $\cH_{ws}$. Therefore, $W|_{\bigcap_{i=1}^n \bigcap_{j\geq 0} \ker V_i^* W^j}$ is pure on $\cH_{ws}$. Similarly, since $\cH_{ws}\perp \cH_{su}$, we have 
\[\cH_{ws}\perp \bigcap_{m\geq 0} V_1^m \Big(\bigcap_{j\geq 0} \ker W^* V_1^j\Big).\]
This implies that $V_1|_{\bigcap_{j\geq 0} \ker W^* V_1^j}$ is pure. By Definition \ref{defn.ws}, the family $\{W|_{\cH_{ws}}, V_1|_{\cH_{ws}}, \cdots, V_n|_{\cH_{ws}}\}$ is a weak bi-shift, proving (5). The uniqueness of this decomposition can be easily established since $\cH_{uu}$, $\cH_{us}$, and $\cH_{su}$ are maximal for their respective properties and the subspace corresponding to the weak bi-shift component has to be orthogonal to $\cH_{uu}$, $\cH_{us}$, and $\cH_{su}$. 
\end{proof}

\section{Wold decomposition for Nica-covariant representations}

Now, let us focus on the case when $\{W,V_1,\cdots,V_n\}$ is isometric Nica-covariant, that is with the additional assumption that $W^* V_1=V_n W^*$. We first prove that an isometric representation of $\odometer{n}$ is automatically Nica-covariant if either $W$ is unitary or $\{V_1,\cdots,V_n\}$ is a row unitary. 

\begin{lemma}\label{lm.obs0} Let $\{W,V_1,\cdots,V_n\}$ be an isometric representation  of $\odometer{n}$. If $W$ is a unitary or $\{V_1,\cdots,V_n\}$ is a row unitary, then it is also a Nica-covariant representation. 
\end{lemma}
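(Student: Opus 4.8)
The plan is to handle the two hypotheses separately and, in each case, establish the single identity $W^*V_1 = V_nW^*$; equivalently, after taking adjoints, $V_1^*W = WV_n^*$. In an isometric representation $W$ is always an isometry and $\{V_1,\dots,V_n\}$ always has pairwise orthogonal ranges, so in each case we have one extra piece of information ($W$ invertible, or $\sum_i V_iV_i^* = I$) to exploit.

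Suppose first that $W$ is unitary. The only relation tying $W$ to the ``top'' generator $V_n$ is $WV_n = V_1W$. Since $W^{-1}=W^*$, I would multiply this relation on the left by $W^*$ to get $V_n = W^*V_1W$, and then on the right by $W^*$ to get $V_nW^* = W^*V_1WW^* = W^*V_1$, using $WW^*=I$. This disposes of the first case with essentially no computation.

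Now suppose $\{V_1,\dots,V_n\}$ is a row unitary, i.e.\ $\sum_{i=1}^n V_iV_i^* = I$. The idea is to feed this resolution of the identity into $W$ on the \emph{right}, so that the defining relations $WV_i = V_{i+1}$ ($1\le i\le n-1$) and $WV_n = V_1W$ can be applied:
\[
W = W\Big(\sum_{i=1}^n V_iV_i^*\Big) = \sum_{i=1}^{n-1} V_{i+1}V_i^* + V_1WV_n^* = \sum_{i=2}^n V_iV_{i-1}^* + V_1WV_n^*.
\]
Then I would left-multiply by $V_1^*$: since the ranges of the $V_i$ are pairwise orthogonal, $V_1^*V_i = 0$ for $i\ge 2$, while $V_1^*V_1 = I$, so every term except the last vanishes and we are left with $V_1^*W = WV_n^*$, which is the adjoint of the desired Nica-covariance relation $W^*V_1 = V_nW^*$.

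There is no real obstacle here; the only point that requires care is the bookkeeping — inserting $\sum_i V_iV_i^* = I$ on the right of $W$ (rather than the left), so that the relations $WV_i = V_{i+1}$ and $WV_n = V_1W$ actually apply, and then noticing that orthogonality of the ranges collapses the sum after multiplying through by $V_1^*$. I would write up the two cases as two short standalone paragraphs.
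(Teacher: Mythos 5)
Your proposal is correct and follows essentially the same route as the paper: the unitary case is handled by sandwiching the relation $WV_n=V_1W$ with $W^*$, and the row-unitary case by inserting $\sum_i V_iV_i^*=I$ next to $W$ and collapsing the sum via orthogonality of ranges (your computation is just the adjoint of the paper's, which inserts the identity on the left of $W^*V_1$ instead). No gaps.
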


\begin{proof} When $W$ is unitary, 
\[W^* V_1 = W^* (V_1 W) W^* = W^* W V_n W^* = V_n W^*.\]
When $\{V_1,\cdots,V_n\}$ is a row unitary, 
\begin{align*}
    W^* V_1 &= (\sum_{k=1}^n V_k V_k^*) W^* V_1 
    = (\sum_{k=1}^n V_k (WV_k)^*) V_1  \\
    &= (V_1V_2^* + \cdots + V_{n-1}V_n^* + V_n W^* V_1^*) V_1 = V_n W^*. \qedhere
\end{align*}
\end{proof}

As a result, among the four components in Theorem \ref{thm.Wold}, only the weak bi-shift component may not be Nica-covariant. We would like to prove the only Nica-covariant weak bi-shift is a direct sum of the left-regular representations of $\odometer{n}$. 

Consider the usual Wold decomposition for the isometry $W$: let $\cH_u^W=\bigcap_{m\geq 0} W^m \cH$ and $\cH_s^W=\bigoplus_{m\geq 0} W^m \ker W^*$. 

\begin{lemma} When $\{W,V_1,\cdots,V_n\}$ is Nica-covariant, both $\cH_u^W$ and $\cH_s^W$ reduce all the $V_1,\cdots,V_n$.
\end{lemma}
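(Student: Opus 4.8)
The plan is to reduce the statement to the commutation relations between $W$ and the $V_i$, and then to use that $\cH_u^W=\bigcap_{m\ge0}W^m\cH$ is the intersection of a decreasing chain of ranges of powers of $W$.

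First I would record the identities I need. From $WV_k=V_{k+1}$ one gets $V_k=W^{k-1}V_1$, so $WV_n=V_1W$ becomes $W^nV_1=V_1W$; iterating this gives $V_iW=W^nV_i$, hence $V_iW^m=W^{nm}V_i$ for every $m\ge0$. On the adjoint side, Nica-covariance $W^*V_1=V_nW^*$ gives $V_1^*W=WV_n^*$, while $W^*V_k=V_{k-1}$ for $2\le k\le n$ gives $V_k^*W=V_{k-1}^*$; chaining the Nica relation with these down-shift relations collapses to $V_1^*W^n=WV_1^*$, and therefore $V_1^*W^{nm}=W^mV_1^*$ for all $m\ge0$. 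I would also note $\cH_u^W=\bigcap_{m\ge0}W^{nm}\cH$, since the multiples of $n$ are cofinal in $\mathbb{N}$ and the subspaces $W^m\cH$ are nested.

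Next I would prove that $\cH_u^W$ reduces each $V_i$. Invariance under $V_i$ follows from $V_i\cH_u^W\subseteq\bigcap_mV_i(W^m\cH)=\bigcap_mW^{nm}V_i\cH\subseteq\bigcap_mW^{nm}\cH=\cH_u^W$. For invariance under $V_i^*$, recall that $\cH_u^W$ reduces the single isometry $W$ by the classical Wold decomposition, hence is $W^*$-invariant; since $V_i^*=V_1^*W^{*(i-1)}$, it is enough to treat $V_1^*$, and $V_1^*\cH_u^W=V_1^*\bigcap_mW^{nm}\cH\subseteq\bigcap_mV_1^*(W^{nm}\cH)=\bigcap_mW^mV_1^*\cH\subseteq\bigcap_mW^m\cH=\cH_u^W$. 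Thus $\cH_u^W$ reduces every $V_i$.

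Finally, since $\cH=\cH_u^W\oplus\cH_s^W$ and the orthogonal complement of a subspace reducing an operator is again reducing, $\cH_s^W$ reduces every $V_i$ as well. The argument has no serious obstacle; the only step requiring care is the derivation of $V_1^*W^n=WV_1^*$, which must correctly interleave the Nica-covariance relation $V_1^*W=WV_n^*$ with the down-shift relations $V_k^*W=V_{k-1}^*$ $(2\le k\le n)$, together with the bookkeeping observation that one should pass to the cofinal subfamily $\{W^{nm}\cH\}$ so that the factor-of-$n$ inflation of exponents coming from $V_iW=W^nV_i$ stays inside $\cH_u^W$.
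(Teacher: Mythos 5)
Your proof is correct and follows essentially the same route as the paper's: reduce to $\cH_u^W$, establish the intertwining relations $V_1W^m=W^{nm}V_1$ and $V_1^*W^{nm}=W^mV_1^*$ from the defining and Nica-covariance relations, use cofinality of $\{W^{nm}\cH\}$ in $\{W^m\cH\}$ to get that $\cH_u^W$ reduces $V_1$, and then pass to the other $V_i$ via $V_i=W^{i-1}V_1$ together with the fact that $\cH_u^W$ reduces $W$.
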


\begin{proof} First of all, since $\cH=\cH_u^W\oplus \cH_s^W$, it suffices to show that $\cH_u^W$ reduces all the $V_1,\cdots, V_n$. We first prove that $\cH_u^W$ reduces $V_1$. Take any $h\in \cH^W_u$. Since $\cH_u^W=\bigcap_{k\geq 0} W^k \cH$, $h$ is in the range of each $W^k$. For each $k\geq 0$, we can write $h=W^k h_k$ for some $h_k\in\cH_u^W$. We have
\[V_1 h = V_1 W^k h_k = W^{nk} V_1 h_k \in \bigcap_{m=0}^{nk} W^m\cH.\]
On the other hand, the Nica-covariance condition implies that $V_1^* W = W V_n^* = W V_1^* W^{*(n-1)}$, and thus $V_1^* W^n=W V_1^*$. Pick $k=mn$, we have
\[V_1^* h = V_1^* W^{mn} h_{mn} =  W^m V_1^* h_{mn} \in  W^m\cH. \]
Therefore, $\cH_u^W$ reduces $V_1$. Now since $V_i = W^{i-1} V_1$, we have that 
\begin{align*}
V_i \cH_u^W &= W^{i-1} V_1 \cH_u^W \subset W^{i-1} \cH_u^W \subset \cH_u^W, \\
V_i^* \cH_u^W &= V_1^* W^{(i-1)*} \cH_u^W \subset V_1^* \cH_u^W \subset \cH_u^W.
\end{align*}
Therefore, $\cH_u^W$ reduces all the $V_i$ as well. 
\end{proof}

Since both $\cH_u^W$ and $\cH_s^W$ reduce $\{V_1,\cdots,V_n\}$, we can apply Popescu's Wold decomposition for row isometries on these these two reducing subspaces. As a result, we obtain:
\begin{enumerate}
    \item $\cH_{uu}^0 = \bigcap_{m\geq 0} \bigoplus_{|\mu|=m} V_\mu \cH_u^W$, on which $W$ is unitary and $\{V_1,\cdots,V_n\}$ is a row unitary;
    \item  $\cH_{us}^0 = \bigoplus_{\mu\in\mathbb{F}_n^+} V_\mu (\bigcap_{i=1}^n \ker V_i^* \cap \cH_u^W)$ and on which $W$ is unitary and $\{V_1,\cdots,V_n\}$ is pure;
    \item $\cH_{su}^0 = \bigcap_{m\geq 0} \bigoplus_{|\mu|=m} V_\mu \cH_s^W$, on which $W$ is pure and $\{V_1,\cdots,V_n\}$ is a row unitary;
    \item  $\cH_{ss}^0 = \bigoplus_{\mu\in\mathbb{F}_n^+} V_\mu (\bigcap_{i=1}^n \ker V_i^* \cap \cH_s^W)$ and on which $W$ is pure and  $\{V_1,\cdots,V_n\}$ is pure.
\end{enumerate}

By the uniqueness of the decomposition in Theorem \ref{thm.Wold}, we have that $\cH_{uu}^0=\cH_{uu}$, $\cH_{us}^0=\cH_{us}$, $\cH_{su}^0=\cH_{su}$, and finally $\cH_{ss}^0=\cH_{ws}$. As a result, both $W$ and $\{V_1,\cdots,V_n\}$ are pure if it is a Nica-covariant weak bi-shift. 
We further claim that the Nica-covariant weak bi-shift component  must be a direct sum of the left regular representation of the $\odometer{n}$. We call a space $\cL$ a wandering subspace for $\{W,V_1,\cdots,V_n\}$ if the collections $\{V_\mu W^m \cL: \mu\in\mathbb{F}_n^+, m\geq 0\}$ are pairwise orthogonal. It is easy to see that when $\cL$ is a wandering subspace, we can define $\cK=\bigoplus_{\mu\in\mathbb{F}_n^+} \bigoplus_{m\geq 0} V_\mu W^m \cL$ and it is easy to verify that on $\cK$, $\{W,V_1,\cdots,V_n\}$ is unitarily equivalent to a direct sum of $\dim \cL$-copies of the left regular representation of $\odometer{n}$.

\begin{proposition}\label{prop.NC1} Let $\{W,V_1,\cdots,V_n\}$ be a Nica-covariant weak bi-shift on $\cH$. Then $\cL=\bigcap_{i=1}^n \ker V_i^* \cap \ker W^*$ reduces $\{W, V_1,\cdots,V_n\}$, and we have $\cH=\bigoplus_{\mu\in\mathbb{F}_n^+} \bigoplus_{m\geq 0} V_\mu W^m \cL$ so that $\{W,V_1,\cdots,V_n\}$ is unitarily equivalent to a direct sum of $\dim \cL$-copies of the left regular representation of $\odometer{n}$.
\end{proposition}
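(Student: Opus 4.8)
The plan is to verify first that $\cL=\bigcap_{i=1}^n \ker V_i^* \cap \ker W^*$ is a reducing subspace for the whole family, then show that the invariant subspace it generates, namely $\cK=\bigoplus_{\mu\in\mathbb{F}_n^+}\bigoplus_{m\geq 0} V_\mu W^m \cL$, exhausts $\cH$. For the first part I would use the Nica-covariance relations $V_i^* W = V_{i-1}^*$ for $i\neq 1$ and $V_1^* W = W V_n^*$ (equivalently $W^* V_1 = V_n W^*$, $W^* V_i = V_{i-1}$ for $i\geq 2$), exactly in the style of the reducing computations in Proposition \ref{prop.us}: for $h\in\cL$ one checks $V_i^* Wh$, $V_i^* W^* h$, $W^* V_i h$, $W V_i h$ all land in $\cL$ again, using that $h$ is killed by every $V_i^*$ and by $W^*$. (In fact this is essentially the content already proved in the max\-imality argument of Proposition \ref{prop.us}, applied now with $W$ pure rather than unitary.) One also needs that $\{V_\mu W^m \cL\}$ are genuinely pairwise orthogonal; since $\cL\subset \bigcap \ker V_i^*$ it is wandering for the row isometry, and since $\cL$ reduces $W$ one gets the finer orthogonality across the $W$-powers from the unique normal form $v_\mu w^N$ of elements of $\odometer{n}$ together with the fact that each $V_\mu W^m$ is an isometry.

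Next I would establish that $\cK=\cH$. The key input is that on $\cH$ both $W$ is pure and $\{V_1,\dots,V_n\}$ is pure, which was derived in the paragraph preceding the proposition from the Nica-covariant structure and Theorem \ref{thm.Wold}. Purity of $W$ gives $\cH=\bigoplus_{m\geq 0} W^m \ker W^*$; purity of the row isometry, applied inside each graded piece, should give $\ker W^* = \bigoplus_{\mu\in\mathbb{F}_n^+} V_\mu(\ker W^*\cap\bigcap_i\ker V_i^*)$ — but this requires knowing that $\ker W^*$ is invariant (indeed reducing) for $\{V_1,\dots,V_n\}$, or at least that Popescu's decomposition of the row isometry is compatible with the $W$-grading. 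The cleaner route is the one already set up in the excerpt: we know $\cH_{ss}^0 = \cH_{ws} = \cH$, and $\cH_{ss}^0 = \bigoplus_{\mu\in\mathbb{F}_n^+} V_\mu(\bigcap_i \ker V_i^* \cap \cH_s^W)$ with $\cH_s^W=\bigoplus_{m\geq 0} W^m\ker W^*$. So it remains to show $\bigcap_i\ker V_i^*\cap \cH_s^W = \bigoplus_{m\geq 0} W^m \cL$. The inclusion $\supseteq$ is clear since $W^m\cL\subset \cH_s^W$ and $\cL\subset\bigcap_i\ker V_i^*$ with $\bigcap_i\ker V_i^*$ being $W$-invariant enough (using $V_i^* W = V_{i-1}^*$ or $WV_n^*$, a vector in $\cL$ stays in $\bigcap\ker V_i^*$ under $W$). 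For $\subseteq$: take $x\in\bigcap_i\ker V_i^*$ with $x=\sum_m W^m x_m$, $x_m\in\ker W^*$; one shows each $x_m\in\cL$, i.e. $x_m\in\bigcap_i\ker V_i^*$. This is where the Nica-covariance is used decisively — applying $V_i^*$ to $\sum W^m x_m$ and using $V_i^* W^m$ reduces, via the relations, to lower $W$-powers, forcing the components to vanish one grade at a time.

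The main obstacle I anticipate is exactly this last step: showing that the row-isometry wandering space of the $\cH_s^W$ piece is spanned by $W$-translates of the smaller space $\cL$ — equivalently, reconciling Popescu's wandering space $\bigcap_i\ker V_i^*\cap\cH_s^W$ (adapted to the $V$'s) with the $W$-adapted wandering space $\cL$. One must carefully track how $V_i^*$ interacts with $W^m$; the asymmetry $W^n V_1 = V_1 W$ means $V_1^*$ lowers the $W$-degree by $n$ while $V_i^*$ for $i\geq 2$ shifts it by $1$ down in $i$, so an inductive argument on the $W$-grading (peeling off $W^m \ker W^*$ one $m$ at a time) is the natural device. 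Once the two descriptions of the generating wandering space agree, the conclusion that $\cH=\bigoplus_{\mu,m} V_\mu W^m\cL$ and hence $\{W,V_1,\dots,V_n\}$ is a direct sum of $\dim\cL$ copies of the left regular representation of $\odometer{n}$ follows from the orthogonality established at the start, since the left regular representation is by construction the one generated by a single wandering vector. I would also double-check that no $W$-unitary or row-unitary part can sneak back in — but that is guaranteed since we are already inside $\cH_{ws}$, where Theorem \ref{thm.Wold}'s uniqueness forbids it.
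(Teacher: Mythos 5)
Your proposal is correct and follows essentially the same route as the paper: the heart of both arguments is that $\bigcap_{i=1}^n\ker V_i^*$ reduces $W$ (via the Nica-covariance relations), after which purity of the row isometry gives $\cH=\bigoplus_{\mu}V_\mu\bigl(\bigcap_{i}\ker V_i^*\bigr)$ and purity of $W$ on this reducing subspace gives $\bigcap_{i}\ker V_i^*=\bigoplus_{m\geq 0}W^m\cL$. The paper short-circuits the ``main obstacle'' you anticipate by noting that Nica-covariance forces $\bigcap_{i}\bigcap_{j\geq 0}\ker V_i^*W^j=\bigcap_{i}\ker V_i^*$, so the weak bi-shift hypothesis applies verbatim to give purity of $W$ there and no grade-by-grade peeling is needed.
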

\begin{proof} We have shown that when $\{W,V_1,\cdots,V_n\}$ is Nica-covariant, both $W$ and $\{V_1,\cdots,V_n\}$ are pure. Therefore, by the Wold decomposition for row isometries, let $\cL_0=\bigcap_{i=1}^n \ker V_i^*$, we have $\cH=\bigoplus_{\mu\in\mathbb{F}_n^+} V_\mu \cL_0$. 

We observe that $\cL_0$ reduces $W$: pick any $h\in \cL_0$ and any $1\leq i\leq n$, compute
\begin{align*}
    V_i^* W h &= \begin{cases} V_{i-1}^* h, & \text{ if } i\neq 1, \\
    W V_n^* h, & \text{ if } i=1.
    \end{cases}\\
    V_i^* W^* h &= \begin{cases} V_{i+1}^* h, & \text{ if } i\neq n, \\
    W^* V_1^* h, & \text{ if } i=n.
    \end{cases}
\end{align*}
In any case, since $V_i^* h=0$ for all $i$, $Wh, W^*h\in\cL_0$. 

Next, for each $1\leq i\leq n$ and $j\geq 0$, consider the space $\ker V_i^* W^j$. Repeatedly apply $V_1^* W = W V_n^*$ and $V_k^* = V_1^* W^{*(k-1)}$, we have $V_i^* W^j = W^{j'} V_{i'}^*$ for some $j'\geq 0$ and $1\leq i'\leq n$. Since $W$ is an isometry, $\ker V_i^* W^j = \ker W^{j'} V_{i'}^* = \ker V_{i'}^*$. Therefore, 
\[\bigcap_{i=1}^n \bigcap_{j\geq 0} \ker V_i^* W^j=\bigcap_{i=1}^n \ker V_i^*=\cL_0.\]
Since $\{W,V_1,\cdots,V_n\}$ is a weak bi-shift, we have $W|_{\cL_0}$ is a pure isometry. Therefore, $\cL_0 = \bigoplus_{m\geq 0} W^m (\ker W^* \cap \cL_0) = \bigoplus_{m\geq 0} W^m \cL$. Since $\{V_\mu \cL_0\}_{\mu\in\mathbb{F}_n^+}$ are pairwise orthogonal and $\{ W^m \cL\}_{m\geq 0}$ are pairwise orthogonal, we have $\{V_\mu W^m \cL: \mu\in\mathbb{F}_n^+, m\geq 0\}$ are pairwise orthogonal, and 
\[\cH = \bigoplus_{\mu\in\mathbb{F}_n^+} V_\mu \cL_0 = \bigoplus_{\mu\in\mathbb{F}_n^+} V_\mu (\bigoplus_{m\geq 0} W^m \cL)=\bigoplus_{\mu\in\mathbb{F}_n^+} \bigoplus_{m\geq 0} V_\mu W^m \cL. \qedhere\]
\end{proof}

Finally, after combining these results, we obtain the following Wold-type decomposition for isometric Nica-covariant representations of $\odometer{n}$. 

\begin{theorem}\label{thm.Wold.NC} Let $(W,V_1,V_2,\cdots,V_n)$ be an isometric Nica-covariant representation of the odometer semigroup $\odometer{n}$ on some Hilbert space $\cH$. Then $\cH$ decomposes uniquely as
\[\cH=\cH_{uu}\oplus \cH_{us} \oplus \cH_{su} \oplus \cH_{ss},\]
where,
\begin{enumerate}
    \item The subspaces $\cH_{uu}, \cH_{us}, \cH_{su}, \cH_{ss}$ are reducing for $W$ and $V_i$.
    \item The $W|_{\cH_{uu}}$ is unitary and $\{V_1,\cdots,V_n\}|_{\cH_{uu}}$ is a row unitary.
    \item The $W|_{\cH_{us}}$ is unitary and $\{V_1,\cdots,V_n\}|_{\cH_{us}}$ is pure.
    \item The $W|_{\cH_{su}}$ is pure, and $\{V_1,\cdots,V_n\}|_{\cH_{su}}$ is a row unitary.
    \item The family $\{W, V_1,\cdots,V_n\}|_{\cH_{ss}}$ is unitarily equivalent to a direct sum of copies of the left regular representation of $\odometer{n}$.
\end{enumerate}
\end{theorem}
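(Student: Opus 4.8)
The plan is to assemble Theorem \ref{thm.Wold.NC} from the general decomposition of Theorem \ref{thm.Wold} together with Lemma \ref{lm.obs0} and Proposition \ref{prop.NC1}. First I would invoke Theorem \ref{thm.Wold} to obtain the unique orthogonal decomposition $\cH=\cH_{uu}\oplus\cH_{us}\oplus\cH_{su}\oplus\cH_{ws}$ into reducing subspaces, with the stated behaviour of $W$ and $\{V_1,\dots,V_n\}$ on the first three summands; conditions (1)--(4) are then immediate, and it only remains to identify $\cH_{ws}$ with a space $\cH_{ss}$ carrying a direct sum of left regular representations. By hypothesis $\{W,V_1,\dots,V_n\}$ is Nica-covariant on all of $\cH$, hence so is its restriction to the reducing subspace $\cH_{ws}$; thus $\{W|_{\cH_{ws}},V_1|_{\cH_{ws}},\dots,V_n|_{\cH_{ws}}\}$ is a Nica-covariant weak bi-shift. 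Applying Proposition \ref{prop.NC1} to this restriction gives a wandering subspace $\cL=\bigcap_{i=1}^n\ker V_i^*\cap\ker W^*$ (computed inside $\cH_{ws}$) such that $\cH_{ws}=\bigoplus_{\mu\in\mathbb{F}_n^+}\bigoplus_{m\geq 0}V_\mu W^m\cL$ and the representation there is unitarily equivalent to $\dim\cL$ copies of the left regular representation of $\odometer{n}$. Renaming $\cH_{ss}:=\cH_{ws}$ yields condition (5).

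For uniqueness I would argue exactly as in the proof of Theorem \ref{thm.Wold}: the spaces $\cH_{uu}$, $\cH_{us}$, $\cH_{su}$ are characterized by maximality among reducing subspaces on which the respective pair of properties (unitary/row unitary, unitary/pure, pure/row unitary) holds, so they are forced; the fourth summand is then forced to be their common orthogonal complement. One should also check that a direct sum of left regular representations is genuinely a weak bi-shift in the sense of Definition \ref{defn.ws} — which is clear since on $\ell^2(\odometer{n})$ the operators $W$ restricted to $\bigcap_{i,j}\ker V_i^*W^j$, $V_1$ restricted to $\bigcap_j\ker W^*V_1^j$, and the row $\{V_2,\dots,V_n,V_1W\}$ all act as (sums of) shifts with no unitary part — so the $\cH_{ss}$ appearing here is consistent with the $\cH_{ws}$ of Theorem \ref{thm.Wold}, and no competing decomposition can exist.

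The only genuine content beyond bookkeeping is verifying the hypothesis of Proposition \ref{prop.NC1}, namely that $\cH_{ws}$ really is a \emph{Nica-covariant} weak bi-shift and not merely a weak bi-shift; but this is automatic because Nica-covariance $W^*V_1=V_nW^*$ is an operator identity that passes to any reducing subspace. The discussion preceding Proposition \ref{prop.NC1} already established, via the two Wold decompositions of $W$ on $\cH_u^W$ and $\cH_s^W$ followed by Popescu's theorem on each piece, that the four resulting pieces coincide with $\cH_{uu},\cH_{us},\cH_{su},\cH_{ws}$ and that on $\cH_{ws}$ both $W$ and $\{V_1,\dots,V_n\}$ are pure; Proposition \ref{prop.NC1} then upgrades ``both pure'' to ``direct sum of left regular representations.'' So I anticipate the main obstacle is purely expository: making clear that Lemma \ref{lm.obs0} is what guarantees the first three components are already Nica-covariant (so nothing is lost by starting from Theorem \ref{thm.Wold}), and that the identification $\cH_{ss}^0=\cH_{ws}$ uses the uniqueness clause of Theorem \ref{thm.Wold}. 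I would therefore keep the proof short, citing Theorem \ref{thm.Wold}, Lemma \ref{lm.obs0}, and Proposition \ref{prop.NC1}, and spend a sentence each on conditions (1)--(5) and on uniqueness.
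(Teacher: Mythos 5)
Your proposal is correct and follows essentially the same route as the paper: apply Theorem \ref{thm.Wold}, note via Lemma \ref{lm.obs0} that the first three components are automatically Nica-covariant, observe that the Nica-covariance identity $W^*V_1=V_nW^*$ restricts to the reducing subspace $\cH_{ws}$, and then invoke Proposition \ref{prop.NC1} to upgrade the weak bi-shift component to a direct sum of left regular representations. The extra checks you mention (that left regular representations are weak bi-shifts, and the identification $\cH_{ss}^0=\cH_{ws}$) are consistent with the discussion the paper places before Proposition \ref{prop.NC1}, so nothing is missing.
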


\begin{proof} Apply Theorem \ref{thm.Wold} to $\{W,V_1,\cdots,V_n\}$ as they are isometric representations of $\odometer{n}$. The first three components are automatically Nica-covariant by Lemma \ref{lm.obs0}. Proposition \ref{prop.NC1} implies that the weak bi-shift component is a direct sum of the left regular representations.
\end{proof} 

\section{Examples} 

This paper is largely motivated by our recent progress on the characterization of atomic Nica-covariant representations of the odometer semigroup. We shall leave the full characterization for a subsequent paper. Atomic representations often provide an simple yet interesting class of representations (see, for example, representations of free semigroup algebra \cite{DP1999}, single vertex $2$-graph \cite{DPY2008}, and free semigroupoid algebras \cite{DDL2019}). One special class of atomic representations is called the permutation representation, first considered for Cuntz algebras in \cite{BJ1999}. 
In this section, we build concrete atomic isometric representations of $\odometer{n}$ for each of the four possible components in the Wold-type decomposition that we established in Theorem \ref{thm.Wold}.

For the odometer semigroup, we say an representation $(W,V_1,\cdots,V_n)$ on a separable Hilbert space $\cH$ is atomic if there is an orthonormal basis $\{e_i\}_{i\in I}$ for $\cH$ and injective maps $\tau$ and $\{\pi_1, \dots, \pi_n\}$ on $I$, such that 
\begin{enumerate}
    \item The ranges of $\pi_k$'s are pairwise disjoint;
    \item For each $i\in I$, $We_i=\lambda_i e_{\tau(i)}$ for some $\lambda_i\in\mathbb{T}$ ;
    \item For each $i\in I$ and $1\leq k\leq n$, $V_k e_i=\omega_{k,i} e_{\pi_k(i)}$ for some $\omega_{k,i}\in\mathbb{T}$. 
\end{enumerate}

Recall that the atomic row isometries $\{V_1,\cdots,V_n\}$ were fully characterized in \cite[Theorem 3.4]{DP1999} to study the free semigroup algebras (see also \cite{DKP2001}). We briefly go over three types of atomic row isometries. 
First, we construct a directed graph whose vertices are $e_i$ and there is an edge from $e_i$ to $e_j$ if there exists $V_k$ such that $V_k e_i\in\lspan\{e_j\}$. Each connected component of this directed graph corresponds to a reducing subspace of the atomic representation, and thus we assume this directed graph is connected. 
For each basic vector $e_{i_0}$, if $e_{i_0}$ is in the range of a unique $V_{k_1}$, we let $e_{i_1}$ be the basic vector such that $V_{k_1} e_{i_1}\in \lspan\{e_{i_0}\}$. Repeat this process, there are three possibilities:
\begin{enumerate}
    \item The process stops at some $e_{i_m}$ that is not in the range of any $V_k$. Then $e_{i_m}$ is a wandering vector, and on $\overline{\lspan}\{V_\mu e_{i_m}: \mu\in\mathbb{F}_n^+\}$, $\{V_1,\cdots,V_n\}$ is unitarily equivalent to the left regular representation of $\mathbb{F}_n^+$. This is called the left-regular type. 
    \item The process enters into a cycle when $e_{i_s}=e_{i_t}$ for some $0\leq s<t$. This corresponds to the cycle type.
    \item The process never stops and we obtain an infinite sequence of unique basic vectors $\{e_{i_m}\}_{m\geq 0}$ and $V_{k_m}$. In this case, we can rescale $e_i$ such that $V_{k_m} e_{i_m}=e_{i_{m-1}}$ for all $m\geq 1$.
    This corresponds to the inductive type. An inductive type atomic representation is called non-cyclic if the sequence $\{k_m\}$ is not eventually periodic (that is, $\{k_m: m\geq M\}$ is non-periodic for each $M$).
\end{enumerate}

We first show that non-cyclic inductive type atomic representations always give rise to a unique unitary-row unitary representation of the odometer semigroup. These are examples for the $\cH_{uu}$ component.

\begin{proposition} Let $\{V_1,\cdots,V_n\}$ be a non-cyclic inductive type atomic representation on $\cH$. Then there exists a unique unitary operator $W$ such that $\{W,V_1,\cdots,V_n\}$ is an atomic representation of the odometer semigroup $\odometer{n}$. Moreover, $W$ is a unitary and $\{V_1,\cdots,V_n\}$ is a row unitary.
\end{proposition}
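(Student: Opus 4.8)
The plan is to use the structure of a non-cyclic inductive type atomic representation to produce the extra generator $W$ explicitly, and then show it must be unitary. Recall that for such a representation we have (after connecting the graph and rescaling) an infinite sequence of distinct basis vectors $\{e_{i_m}\}_{m\geq 0}$ with symbols $k_m$ such that $V_{k_m} e_{i_m} = e_{i_{m-1}}$, and the non-cyclic hypothesis says $\{k_m\}$ is not eventually periodic. The first step is to understand the orbit structure: starting from a fixed $e_{i_0}$ and repeatedly taking the unique preimage under whichever $V_k$ hits the current vector, we march out the sequence $\{e_{i_m}\}$. I would first argue that in fact \emph{every} basis vector lies in the range of exactly one $V_k$ (otherwise some $e_j$ is a wandering vector, generating a left-regular summand, which can only occur if the whole inductive sequence eventually feeds into it, contradicting that the $e_{i_m}$ are all distinct and the graph is connected and generated by this single inductive thread). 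This means $\{V_1,\dots,V_n\}$ has full range: $\sum_k V_kV_k^* = I$, i.e. it is a row unitary.

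\textbf{Constructing $W$.} The second step is to read off $W$ from the odometer relations. On the left regular representation of $\odometer{n}$, $W$ acts on $e_\mu$ (for $\mu\in\mathbb{F}_n^+$) by the add-one rule, carrying when a digit hits $n$. Here the ``address'' of $e_{i_m}$ relative to $e_{i_0}$ is the word $k_1 k_2 \cdots k_m$ — equivalently, reading the sequence $\{k_m\}_{m\geq 1}$ gives a right-infinite string of digits in $\{1,\dots,n\}$, and the vector $e_{i_0}$ sits at the ``origin'' of this infinite odometer tape. The operator $W$ should implement ``add $1$ with carrying'' on these infinite tapes. The key point making this well-defined as a \emph{bijection} of the index set is precisely non-cyclicity: if $\{k_m\}$ were eventually the constant string $n$, the carry would propagate forever and ``add one'' would have no image; non-periodicity (in particular, not eventually all $n$'s, and not eventually all $1$'s in the other direction) guarantees every tape has both a successor and a predecessor, so the add-one map $\sigma$ on the index set is a well-defined bijection. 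I would define $We_i = \lambda_i e_{\sigma(i)}$ with the scalars $\lambda_i\in\mathbb{T}$ forced by the requirement $WV_k = V_{k+1}$ ($k<n$) and $WV_n = V_1 W$; tracing these relations along the tape pins down each $\lambda_i$ uniquely up to the single free choice of $\lambda_{i_0}$, and in fact that choice is also forced by consistency around the carrying, so $W$ is unique.

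\textbf{Verifying the relations and unitarity.} The third step is routine verification: check directly from the definitions of $\sigma$ and $\pi_k$ that $W$ is an isometry (it permutes the orthonormal basis up to unimodular scalars, so it is automatically isometric — and since $\sigma$ is a bijection, it is onto, hence unitary), and that $WV_k = V_{k+1}$ for $1\leq k\leq n-1$ and $WV_n = V_1 W$ hold on each basis vector; these reduce to the bookkeeping identity that ``add one then read off digit $k$'' agrees with ``read off digit $k+1$ then add one'' with the appropriate carry, which is exactly how the odometer is built. Combined with the row-unitarity established in step one, this gives all the claimed properties. For uniqueness of $W$ as an operator (not just within atomic representations), note any $W$ satisfying the odometer relations with these $V_i$ must send $e_{i_0}$ into $\lspan\{e_{\sigma(i_0)}\}$ by chasing the relation $W V_{k_1}\cdots = \cdots$ and using that the ranges of the $V_k$ determine where a carried digit lands; propagating along the connected graph forces $W$ entirely.

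\textbf{Main obstacle.} The delicate point is step two: showing that non-cyclicity is exactly what is needed for the add-one map $\sigma$ on the infinite-tape index set to be a well-defined \emph{bijection}, and carefully matching the combinatorics of carrying on a right-infinite string with the concrete action of the $V_k$'s on the basis. In particular one must rule out the degenerate eventually-all-$n$ (no successor) and eventually-all-$1$ (trouble on the predecessor side, i.e.\ $W^*$) tapes using the full strength of ``$\{k_m:m\geq M\}$ non-periodic for every $M$,'' and confirm that the forced scalars $\lambda_i$ are consistent — this is where an inconsistent choice would secretly be obstructed, so one checks the cocycle-type condition closes up. Everything else is direct computation with the definitions already in hand.
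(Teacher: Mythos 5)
Your proposal is correct and follows essentially the same route as the paper: establish that the inductive type forces $\sum_k V_kV_k^*=I$, define $W$ by the add-one-with-carry rule (the paper writes this as $e_{i_0}=V_n^{m-1}V_{k_m}e_{i_m}\mapsto V_1^{m-1}V_{k_m+1}e_{i_m}$ for the first $m$ with $k_m\neq n$), and use non-cyclicity to exclude the eventually-all-$n$ and eventually-all-$1$ tails so that the successor and predecessor maps are everywhere defined, giving unitarity and uniqueness from the forced relations. Your worry about consistency of the scalars dissolves once $W$ is given by that single unambiguous formula on each basis vector, which is exactly how the paper sidesteps it.
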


\begin{proof} First of all inductive type atomic representation gives rise to a row unitary $\{V_1,\cdots,V_n\}$. 
For each basic vector $e_{i_0}\in \cH$, it can be uniquely written as $e_{i_0}=V_{k_1} e_{i_1}$. Since $V$ is non-cyclic, we can eventually find $m\geq 1$ with $k_m\neq n$, so that $e_{i_0}=V_n^{m-1} V_{k_m} e_{i_m}$. Define $We_{i_0} = V_1^{m-1} V_{k_m+1} e_{i_m}$. One can easily verify that $W$ is a well-defined isometry such that $\{W,V_1,\cdots,V_n\}$ is an atomic representation of $\odometer{n}$. To see $W$ is unitary, for any basic vector $e_{i_0}$, we can write it as $e_{i_0}=V_1^{m-1} V_{k_m} e_{i_m}$ for some $2\leq k_m\leq n$, and $e_{i_0}=W V_n^{m-1} V_{k_m-1} e_{i_m}$ by the definition of $W$. 
This choice of $W$ is unique since $WV_n^m V_k=V_1^m V_{k+1}$ for any $m\geq 0$ and $1\leq k\leq n-1$. 
\end{proof}

For unitary-pure row isometry representation, Corollary \ref{cor.us} describes their general structure. It is essentially composed of a pure row isometry and a unitary operator on its wandering space. One can easily build such an atomic representation.

\begin{example} Let $\cH=\overline{\lspan}\{e_\mu: \mu\in\mathbb{F}_n^+\}$ and $V_i$ be the left-regular representation of $\mathbb{F}_n^+$. For each $\lambda\in\mathbb{T}$, define
\[We_\mu = \begin{cases}
e_{(\mu_1+1)\mu_2\cdots \mu_m}, & \text{ if } \mu_1\neq n, \\
e_{1(\mu_2+1)\mu_3\cdots \mu_m}, & \text{ if } \mu_1=n, \mu_2\neq n, \\
& \vdots \\
\lambda e_{\underbrace{1\cdots 1}_{m}}, & \text{ if } \mu_1=\cdots = \mu_m=n. \\
\end{cases}
\]
In particular, for the empty word $\emptyset\in\mathbb{F}_n^+$, by convention, define $We_\emptyset = \lambda e_\emptyset$. 
One can clearly see that $W$ is unitary on $\lspan\{e_\mu: |\mu|=m\}$ for each $m\geq 0$. Therefore, $W$ is unitary. One can verify that $\{W,V_1,\cdots,V_n\}$ is an atomic representation of $\odometer{n}$. Different choices of $\lambda$ clearly induce non-equivalent isometric representations, because $e_\emptyset$ is an eigenvector of $W$ with eigenvalue $\lambda$. Moreover, by Corollary \ref{cor.us}, these are all the unitary-pure row isometry type representations where the row isometry is the left regular representation. 
\end{example}

We now construct a pure-row unitary type that corresponds to the $\cH_{su}$ component. 

\begin{example} Consider the following diagram where each vertex corresponds to a basic vector, dotted arrows correspond to the map $W$, solid arrows corresponds to the map $V_k$ depending on the label. One can verify that this defines an atomic representation $\{W,V_1,V_2\}$ for $\odometer{2}$ where $W$ is a unilateral shift and $\{V_1,V_2\}$ is a row unitary. 

\begin{figure}[h]
    \centering

    \begin{tikzpicture}[scale=1, every node/.style={scale=1}]
    
    \node at (0,4) {$\bullet$};
    
    \node at (0,2) {$\bullet$};
    \node at (-1.5,0) {$\bullet$};
    \node at (1.5,0) {$\bullet$};
    
    \node at (-2,-2) {$\bullet$};
    \node at (-1,-2) {$\bullet$};
    \node at (1,-2) {$\bullet$};
    \node at (2,-2) {$\bullet$};
    
    \draw [decoration={markings,
    mark=at position 0.97 with \arrow{>}},
    postaction=decorate] plot [smooth] coordinates {(0,4) (0.3, 4.6) (0, 4.8) (-0.3,4.6) (0,4)};
    \node at (0,5) {$1$};

	\draw[decoration={markings,
    mark=at position 0.97 with \arrow{>}},
    postaction=decorate] (0,4) -- (0,2);
	\node at (0.2,3) {$2$};
	
	\draw[decoration={markings,
    mark=at position 0.97 with \arrow{>}},
    postaction=decorate] (0,2) -- (-1.5,0);
    \node at (-1,1) {$1$};
	\draw[decoration={markings,
    mark=at position 0.97 with \arrow{>}},
    postaction=decorate] (0,2) -- (1.5,0);
    \node at (1,1) {$2$};
    
    \draw[decoration={markings,
    mark=at position 0.97 with \arrow{>}},
    postaction=decorate] (-1.5,0) -- (-2,-2);
    \node at (-1.9,-1) {$1$};
	\draw[decoration={markings,
    mark=at position 0.97 with \arrow{>}},
    postaction=decorate] (-1.5,0) -- (-1,-2);
    \node at (-1.1,-1) {$2$};
    
    \draw[decoration={markings,
    mark=at position 0.97 with \arrow{>}},
    postaction=decorate] (1.5,0) -- (1,-2);
    \node at (1.1,-1) {$1$};
	\draw[decoration={markings,
    mark=at position 0.97 with \arrow{>}},
    postaction=decorate] (1.5,0) -- (2,-2);
    \node at (1.9,-1) {$2$};

    \draw [dotted, decoration={markings, post length=1mm,
                 pre length=1mm,
    mark=at position 0.5 with \arrow{>}},
    postaction=decorate] plot [smooth] coordinates {(0,4) (-0.4,3) (0,2)};
    
    \draw [dotted, decoration={markings, post length=1mm,
                 pre length=1mm,
    mark=at position 0.5 with \arrow{>}},
    postaction=decorate] plot [smooth] coordinates {(0,2) (-1.25, 1.5) (-1.5,0)};
    
    \draw [dotted, decoration={markings, post length=1mm,
                 pre length=1mm,
    mark=at position 0.5 with \arrow{>}},
    postaction=decorate] plot [smooth] coordinates {(-1.5,0) (0, 0.4) (1.5,0)};
	
    \draw [dotted, decoration={markings, post length=1mm,
                 pre length=1mm,
    mark=at position 0.5 with \arrow{>}},
    postaction=decorate] plot [smooth] coordinates {(1.5,0) (-0.5, -0.4) (-2,-2)};
    
    \draw [dotted, decoration={markings, post length=1mm,
                 pre length=1mm,
    mark=at position 0.5 with \arrow{>}},
    postaction=decorate] plot [smooth] coordinates {(-2,-2) (-1.5, -1.8) (-1,-2)};
    
    \draw [dotted, decoration={markings, post length=1mm,
                 pre length=1mm,
    mark=at position 0.5 with \arrow{>}},
    postaction=decorate] plot [smooth] coordinates {(-1,-2) (0, -1.6) (1,-2)};
    
    \draw [dotted, decoration={markings, post length=1mm,
                 pre length=1mm,
    mark=at position 0.5 with \arrow{>}},
    postaction=decorate] plot [smooth] coordinates {(1,-2) (1.5, -1.8) (2,-2)};
    
    \draw [dotted, decoration={markings, post length=1mm,
                 pre length=1mm,
    mark=at position 0.97 with \arrow{>}},
    postaction=decorate] plot [smooth] coordinates {(2,-2) (1, -2.2) (0,-2.6)};
    
    \node at (-1.5,-2.5) {$\vdots$};
    \node at (1.5,-2.5) {$\vdots$};
    \end{tikzpicture}
    \label{fig:B.sB}
\end{figure}
\end{example}

Finally, for the weak bi-shift component, the left-regular representation of $\odometer{n}$ clearly falls under this category. We would like to construct an example that is not from the left regular representation of $\odometer{n}$. The atomic representation in the following example is in the same spirit of S\l oci\'{n}ski's example, and in fact, this research is mostly motivated by their similarity. 

\begin{example}\label{ex.ws}  Consider the atomic representation $\{W,V_1,V_2\}$ defined by the following diagram.
\begin{figure}[h]
    \centering

    \begin{tikzpicture}[scale=1, every node/.style={scale=1}]
    
    \node at (0,0) {$\bullet$};
   
    \node at (2,0) {$\bullet$};
    
    \node at (-2.5,2) {$\bullet$};
    \node at (-1.5,2) {$\bullet$};
    \node at (-4.5,2) {$\bullet$};
    \node at (-3.5,2) {$\bullet$};
    
    \node at (0.5,2) {$\bullet$};
    \node at (-0.5,2) {$\bullet$};
    \node at (1.5,2) {$\bullet$};
    \node at (2.5,2) {$\bullet$};

    \node at (3,-1.5) {$\bullet$};
    
    \node at (-5.5,2) {$\dots$};
    \node at (-5.5,0) {$\dots$};
    
    \foreach \x in {0,...,7}{
        \draw [dotted, decoration={markings, post length=1mm,
                 pre length=1mm,
    mark=at position 0.97 with \arrow{>}},
    postaction=decorate] plot [smooth] coordinates {(-4.5+\x, 2) (-4+\x,2.2) (-3.5+\x,2)};
    }
    \foreach \x in {0,...,3}{
        \draw[decoration={markings,
    mark=at position 0.97 with \arrow{>}},
    postaction=decorate] (-4+2*\x, 0 ) -- (-4.5+2*\x,2);
    \node at (-4.4+2*\x,1) {$1$};
    \draw[decoration={markings,
    mark=at position 0.97 with \arrow{>}},
    postaction=decorate] (-4+2*\x, 0 ) -- (-3.5+2*\x,2);
    \node at (-3.6+2*\x,1) {$2$};
    \node at (-4+2*\x,2.9) {$\vdots$};
    }
    
    \draw[decoration={markings,
    mark=at position 0.97 with \arrow{>}},
    postaction=decorate] (2+0.4, -3-0.4*1.5) -- (2,-3);
    \node at (2+0.6, -3-0.6*1.5) {$\ddots$};
    
    \draw [dotted, decoration={markings, post length=1mm,
                 pre length=1mm,
    mark=at position 0.97 with \arrow{>}},
    postaction=decorate] plot [smooth] coordinates {(-5.3, 2.1) (-5,2.2) (-4.5,2)};
    \draw [dotted, decoration={markings, post length=1mm,
                 pre length=1mm,
    mark=at position 0.97 with \arrow{>}},
    postaction=decorate] plot [smooth] coordinates {(-5.3, 0.3) (-5,0.4) (-4,0)};
    
    \draw [dotted, decoration={markings, post length=1mm,
                 pre length=1mm,
    mark=at position 0.97 with \arrow{>}},
    postaction=decorate] plot [smooth] coordinates {(-4, 0) (-3,0.4) (-2,0)};
    \draw [dotted, decoration={markings, post length=1mm,
                 pre length=1mm,
    mark=at position 0.97 with \arrow{>}},
    postaction=decorate] plot [smooth] coordinates {(-2, 0) (-1,0.4) (-0,0)};
    \draw [dotted, decoration={markings, post length=1mm,
                 pre length=1mm,
    mark=at position 0.97 with \arrow{>}},
    postaction=decorate] plot [smooth] coordinates {(0, 0) (1,0.4) (2,0)};
    \draw [dotted, decoration={markings, post length=1mm,
                 pre length=1mm,
    mark=at position 0.97 with \arrow{>}},
    postaction=decorate] plot [smooth] coordinates {(2, 0) (2.6,0.3) (3, 0.25)};
    
    \draw [dotted, decoration={markings, post length=1mm,
                 pre length=1mm,
    mark=at position 0.97 with \arrow{>}},
    postaction=decorate] plot [smooth] coordinates {(1, -1.5) (2, -1.1) (3, -1.5)};
    
    \draw [dotted, decoration={markings, post length=1mm,
                 pre length=1mm,
    mark=at position 0.97 with \arrow{>}},
    postaction=decorate] plot [smooth] coordinates {(3, -1.5) (3.2, -1.4) (3.5, -1.3)};
    
    \draw [dotted, decoration={markings, post length=1mm,
                 pre length=1mm,
    mark=at position 0.97 with \arrow{>}},
    postaction=decorate] plot [smooth] coordinates {(2, -3) (3, -2.6) (3.5, -2.7)};

    \draw[decoration={markings,
    mark=at position 0.97 with \arrow{>}},
    postaction=decorate] (1,-1.5) -- (0,0);
    \node at (0.3, -0.75) {$1$};
    
    \draw[decoration={markings,
    mark=at position 0.97 with \arrow{>}},
    postaction=decorate] (2,-3) -- (1,-1.5);
    \node at (1.3, -0.75-1.5) {$1$};
    
    \draw[decoration={markings,
    mark=at position 0.97 with \arrow{>}},
    postaction=decorate] (2,-3) -- (3,-1.5);
    \node at (2.7, -0.75-1.5) {$2$};
    \draw[decoration={markings,
    mark=at position 0.97 with \arrow{>}},
    postaction=decorate] (1,-1.5) -- (2,0);
    \node at (1.7, -0.75) {$2$};
    
     \node [red] at (-2,0) {$\bullet$};
    \node [red] at (-4,0) {$\bullet$};
    \node [blue] at (1,-1.5) {$\bullet$};
    \node [blue] at (2,-3) {$\bullet$};
    
    \end{tikzpicture}
    \label{fig:B.sD}
\end{figure}

 One can easily verify that this atomic representation has no $\cH_{uu}$, $\cH_{us}$, and $\cH_{su}$ components, and it is not the left-regular representation of $\odometer{2}$. 

One may notice that the red dots are wandering vectors for $\{V_1,V_2\}$ while the blue dots are wandering vectors for $W$. 
This is quite similar to the S\l oci\'{n}ski's example \cite[Example 1]{Slocinski1980} in which the vectors $\{e_{0,-n}\}_{n\geq 1}$ are wandering for $S_1$ and $\{e_{-n,0}\}_{n\geq 1}$ are wandering for $S_2$. 
\end{example}

\bibliographystyle{abbrv}

\end{document}